\newcommand{\dist}{\mathrm{dist}}
\newcommand{\nint}{\int_{\RN}}
\newcommand{\no}[1]{\|#1\|}
\newcommand{\supp}{\mathrm{supp}\,}
\newcommand{\R}{\mathbb{R}}
\newcommand{\C}{\mathbb{C}}
\newcommand{\ra}{\rightarrow}
\newcommand{\wk}{\rightharpoonup}
\newtheorem*{theoremA}{Theorem\,A}
\newtheorem*{theoremB}{Theorem\,B}
\newtheorem{theorem}{Theorem}
\newtheorem*{theorem*}{Theorem}
\newtheorem{lemma}{Lemma}[]
\newtheorem{prop}{Proposition}
\newtheorem*{prop*}{Proposition}
\theoremstyle{definition}
\newtheorem{defn}{Definition}[]
\theoremstyle{remark}
\newcommand{\var}{\varepsilon}
\newcommand{\om}{\omega}
\newcommand{\vfi}{\varphi}
\newcommand{\V}{V}
\newcommand{\G}{G}
\newcommand{\F}{F}
\newcommand{\N}{n}
\newcommand{\RN}{\mathbb{R}\sp\N}
\newcommand{\re}{\mathrm{Re}}
\newcommand{\cg}{\beta}
\newcommand{\Ce}{C}
\newcommand{\CH}{\mathbf{C}}
\newcommand{\HH}{H}
\newcommand{\M}{M_\Ce}
\newcommand{\D}{D}
\newcommand{\EH}{\mathbf{E}}
\newcommand{\VH}{\mathbf{V}}
\newcommand{\J}{J}
\newcommand{\kk}{j}
\newcommand{\ii}{i}
\newcommand{\pt}{\D}
\newcommand{\Y}{X}
\newcommand{\cz}{c_0}
\newcommand{\co}{c_1}
\newcommand{\m}{m_2\sp 2}
\newcommand{\f}{I}
\renewcommand{\dist}{d}
\begin{document}
\title[Orbitally stable coupled standing-waves]%
{On the orbital stability of standing-waves solutions
to a coupled non-linear Klein-Gordon equation}
\author{Garrisi Daniele}
\address[Garrisi Daniele]%
{Math Sci. Bldg Room \# 302, POSTECH, Hyoja-Dong, Nam-Gu, Pohang, 
Gyeongbuk, 790-784, Republic of Korea}
\email{garrisi@postech.ac.kr}
\thanks{This work was supported by Priority Research Centers Program 
through the National Research Foundation of Korea (NRF) funded by the 
Ministry of Education, Science and Technology (Grant \#2010-0029638).}
\keywords{standing waves, elliptic systems, unbounded domain}
\date{\today}
\begin{abstract}
We consider a system of two coupled non-linear Klein-Gordon equations.
We show the existence of standing waves solutions and the existence
of a Lyapunov function for the ground state.
\end{abstract}
\keywords{Orbital stability,standing-waves,Lyapunov,non-linear Klein-Gordon}
\subjclass{35A15,35J50,37K40}
\maketitle
\section*{Introduction}
\noindent 
The purpose of this work is to set some basic results to prove the
orbital stability of standing-waves solutions to a coupled 
non-linear Klein-Gordon equation
\begin{equation*}
\label{eq:nlkg0}
\tag{NLKG}
\left\{
\begin{array}{l}
\pt_{tt}\phi_1 -\Delta\phi_1 + m_1 \sp 2\phi_1 + \pt_1 \F(\phi) = 0\\
\pt_{tt}\phi_2 -\Delta\phi_2 + \m\phi_2 + \pt_2 \F(\phi) = 0.
\end{array}
\right.
\end{equation*}
The existence of standing-waves is obtained through a variational approach
which provides with a solution $ (u,\om) $ of the elliptic system
\begin{equation*}
\label{eq:ES}
\tag{ES}
\left\{
\begin{array}{l}
-\Delta u_1 + m_1 \sp 2 u_1 + \D_1 \F (u) = \om_1 \sp 2 u_1\\
-\Delta u_2 + \m u_2 + \D_2 \F (u) = \om_2 \sp 2 u_2.
\end{array}
\right.
\end{equation*}
For scalar field equations, the development of tools for rigorous
proofs of the orbital stability of standing-waves for the nonlinear
Klein-Gordon equation, or the Schr\"odinger equation is relatively recent.
The first results for NLKG are due to J.~Shatah in \cite{Sha83} 
(and generalised to coupled NLKG in \cite{ZGG10})
where it is shown that solutions of
\[
\tag{0.3$ \om $}
- \Delta u + (1 - \om\sp 2) u + \F'(u) = 0
\]
which are minimizers of some functional $ \J_\om $ on a natural
constraint $ M_\om $, are stable for the values of $ \om $ where
the function $ \om\mapsto \inf_{M_\om} \J_\om $ is convex; for the NLS 
(and other scalar field equations), in the work of T.~Cazenave and 
P.~L.~Lions \cite{CL82} it is proved the orbital stability of solutions
obtained as minimizers of the energy functional $ \mathcal{E} $, 
\cite[p.\,3]{CL82}, on the 
constraint 
\[
N_\lambda := \{\no{u}_{L\sp 2 (\RN)} \sp 2 = \lambda\}.
\]
This result has been generalised to a large class of non-linearities
for NLS in \cite{BBGM07} and in \cite{MMP10} for some class of coupled
NLS. In \cite{BBBM10} the orbital stability of NLKG for a class of 
solutions (in general different from \cite{Sha83}) obtained as
minimizers of the functional 
\[
E(u,\om) = \frac{1}{2}\nint \Big(|\D u|\sp 2 + m\sp 2 u\sp 2\Big) 
+ \nint \F(u)
+ \frac{\om\sp 2}{2} \nint u\sp 2
\]
on the costraint 
\[
\M := \big\{(u,\om)\,|\,\om \no{u}_{L\sp 2 (\RN)} \sp 2 = \Ce\big\}
\]
is considered. 
The hypotheses on the non-linearity $ \F $ are very general and the
stability is proved under the assumption that local solutions of NLKG
exist in $ H\sp 1 $ and radially symmetric minimizers are isolated 
in $ \M $.\vskip .3em

\noindent This work deals with standing-waves solutions obtained as minimizer
of the energy functional
\[
\label{eq:E}
\tag{E}
E(u,\om) = \frac{1}{2}\sum_{\kk = 1} \sp 2
\nint \Big(|\D u_\kk|\sp 2 + m_\kk\sp 2 u_\kk\sp 2 + 
\om_\kk \sp 2 u_\kk\sp 2\Big) 
+ \nint \F(u)
\]
on the constraint
\[
\label{eq:M}
\tag{C}
\M := \big\{(u,\om)\,|\,
\om_\kk \no{u_\kk}_{L\sp 2 (\RN)} \sp 2 = \Ce_\kk\big\}.
\]
The utility of this variational setting is two-fold: firstly, the 
Euler-Lagrange equations correspond to a solution of \eqref{eq:ES},
thus we do not need a further discussion on the sign of the 
Lagrange multipliers. Moreover, due to the symmetry of
the Lagrangian of \eqref{eq:nlkg0}, for a smooth solution $ \phi $ we have
the conservation laws
\begin{gather*}
\label{eq:EN}
\tag{$ \mathbf{E} $}
\EH(\phi,\phi_t) = \frac{1}{2} \nint |\phi_t|\sp 2 + 
|\D\phi|\sp 2 
+ 2V(\phi)\\
\label{eq:C}
\tag{$ \mathbf{C} $}
\CH_\kk (\phi,\phi_t) = -\text{Im}\nint \phi_t \sp\kk 
\overline{\phi}_\kk (t,\cdot),
\ 1\leq\kk\leq 2.
\end{gather*}
These correspond to $ E $ and $ \Ce_\kk $ on standing-waves solutions.
The main theorems are the following:
\begin{theoremA}
\hypertarget{thm:A}{}
Given a minimizing sequence $ (u_n,\om_n) $ for $ E $ over $ \M $,
there exists a minimizer $ (u,\om) $ and 
$ (y_n)_{n\geq 1}\subset\RN $ such that, up to extract a subsequence
\[
u_n \sp\kk = u_\kk(\cdot + y_n) + o(1)\text{ in } H\sp 1 (\RN),
\quad\om_n\ra\om\text{ in } \R\sp 2
\]
for $ 1\leq\kk\leq 2 $.
\end{theoremA}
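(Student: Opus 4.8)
The plan is to prove Theorem~A by the concentration--compactness method, in the spirit of \cite{CL82}. Write $\f$ for $\inf_{\M}E$ and, more generally, $\f(\Ce)$ for the infimum of $E$ over the constraint with datum $\Ce=(\Ce_1,\Ce_2)$, $\Ce_\kk\ge 0$. Using $\om_\kk^2\no{u_\kk}_{L^2}^2=\om_\kk\Ce_\kk$ on $\M$ one rewrites
\[
E(u,\om)=\frac12\sum_{\kk=1}^2\nint\big(|\D u_\kk|^2+m_\kk^2 u_\kk^2\big)+\frac12\sum_{\kk=1}^2\om_\kk\Ce_\kk+\nint\F(u),
\]
and I expect the hypotheses on $\F$ to give that $E$ is bounded below on $\M$ and that $E(u,\om)\to+\infty$ along $\M$ as $\no{u}_{\X}^2+\om_1+\om_2\to\infty$; since $\om_\kk=\Ce_\kk\no{u_\kk}_{L^2}^{-2}$ on $\M$, boundedness of $\no{u}_{\X}$ also keeps $\om_\kk$ away from $0$. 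Hence a minimizing sequence $(u_n,\om_n)$ is bounded in $\X$ with $\om_{n,\kk}$ in a compact subset of $(0,\infty)$, and, along a subsequence, $\om_n\to\om$ in $\R^2$ with $\om_\kk>0$. Setting $\rho_n:=\sum_\kk\om_{n,\kk}|u_{n,\kk}|^2$, so that $\nint\rho_n=\Ce_1+\Ce_2=:\lambda$ is constant, Lions' lemma applies: along a further subsequence one of vanishing, dichotomy, or compactness holds.

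\textbf{Exclusion of vanishing.} If vanishing holds then $u_n\to 0$ in $L^p(\RN)$ for $2<p<2^\ast$, so subcriticality of $\F$ forces $\nint\F(u_n)\to 0$; combining $|\D u_{n,\kk}|^2\ge 0$ with the elementary bound $\tfrac12 m_\kk^2 a+\tfrac12\Ce_\kk^2 a^{-1}\ge m_\kk\Ce_\kk$ (applied to $a=\no{u_{n,\kk}}_{L^2}^2$, since $\om_{n,\kk}=\Ce_\kk a^{-1}$) yields $\f\ge\sum_\kk m_\kk\Ce_\kk$. This is excluded by a test pair: take $\no{v_\kk}_{L^2}^2=\Ce_\kk/m_\kk$ (so the mass and $\om$-terms contribute exactly $\sum_\kk m_\kk\Ce_\kk$) and $\nu_\kk=\Ce_\kk\no{v_\kk}_{L^2}^{-2}$, and, by dilating a fixed profile in a way allowed by the hypotheses on $\F$, arrange $\tfrac12\sum_\kk\no{\D v_\kk}_{L^2}^2+\nint\F(v)<0$, whence $E(v,\nu)<\sum_\kk m_\kk\Ce_\kk$.

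\textbf{Exclusion of dichotomy.} Here $u_n\simeq u_n^{(1)}+u_n^{(2)}$ with $\dist(\supp u_n^{(1)},\supp u_n^{(2)})\to\infty$ and $\no{u_{n,\kk}^{(i)}}_{L^2}^2\to a_\kk^{(i)}$, $a_\kk^{(1)}+a_\kk^{(2)}=\lim_n\no{u_{n,\kk}}_{L^2}^2$. Since $\om_n\to\om$, the vectors $\Ce^{(i)}:=(\om_\kk a_\kk^{(i)})_\kk$ satisfy $\Ce^{(1)}+\Ce^{(2)}=\Ce$, and, using $\F(0)=0$ and subcriticality to localize, $E(u_n,\om_n)\ge E(u_n^{(1)},\om_n)+E(u_n^{(2)},\om_n)-o(1)\ge\f(\Ce^{(1)})+\f(\Ce^{(2)})-o(1)$, so $\f(\Ce)\ge\f(\Ce^{(1)})+\f(\Ce^{(2)})$. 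This contradicts the strict subadditivity $\f(\Ce)<\f(\Ce^{(1)})+\f(\Ce^{(2)})$ for all nontrivial admissible splittings, which I would prove as follows: the dilation $(u,\om)\mapsto(u(\cdot/\sigma),\om)$ carries the constraint with datum $\Ce$ to the one with datum $\sigma^n\Ce$, and since the Dirichlet term scales by $\sigma^{n-2}<\sigma^n$ for $\sigma>1$, applying this to a near-minimizer gives $\f(\theta\Ce)<\theta\f(\Ce)$ for $\theta>1$; this yields subadditivity for the proportional splittings $\Ce=s\Ce+(1-s)\Ce$, while the remaining splittings, including those with a component of $\Ce^{(i)}$ equal to $0$, are handled using the coupling present in $\F$.

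\textbf{Conclusion, and the main obstacle.} Only compactness survives, so there are $y_n\in\RN$ with $u_n(\cdot+y_n)$ tight in $L^2$; along a subsequence $u_n(\cdot+y_n)\wk u$ in $\X$ and, by tightness, $u_n(\cdot+y_n)\to u$ strongly in $L^p(\RN)$ for $2\le p<2^\ast$. Then $\om_{n,\kk}=\Ce_\kk\no{u_{n,\kk}}_{L^2}^{-2}\to\Ce_\kk\no{u_\kk}_{L^2}^{-2}=:\om_\kk$, so $(u,\om)\in\M$; weak lower semicontinuity of the Dirichlet and mass terms together with the $L^p$-convergence of $\nint\F(\cdot)$ give $E(u,\om)\le\liminf_n E(u_n,\om_n)=\f$, so $(u,\om)$ is a minimizer; equality then forces $\no{\D u_{n,\kk}(\cdot+y_n)}_{L^2}\to\no{\D u_\kk}_{L^2}$, i.e.\ $u_n(\cdot+y_n)\to u$ in $\X$, which is the assertion. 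The crux, I expect, is the strict subadditivity of $\Ce\mapsto\f(\Ce)$: it must hold for \emph{every} decomposition $\Ce=\Ce^{(1)}+\Ce^{(2)}$ with $\Ce^{(i)}\ge 0$ nonzero, including the degenerate ones in which an entire field escapes to infinity, and it is there that the growth, sign and coupling hypotheses on $\F$ are used in an essential way; a secondary technical difficulty is the localization (cut-offs, control of the overlap and of $\F$ on it) required to split $E$ additively along the dichotomy when $\F$ is merely continuous with subcritical growth.
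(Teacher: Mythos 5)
Your route is the standard Cazenave--Lions concentration--compactness trichotomy applied directly to $E$ on $\M$, whereas the paper reduces the problem (via \cite[Lemma~1]{Gar10} and \cite[Lemma~2.7]{BBBM08}) to the auxiliary functional $\J$ on $N_\rho$, and then runs a tailored compactness argument (Lemma~\ref{lem:tied-together}, built on Lions' lemma applied to the \emph{product} $u_n^1 u_n^2$, plus Ekeland and a Palais--Smale estimate). Both starting points are legitimate, and your vanishing exclusion and the final passage from weak to strong $H^1$ convergence are essentially sound. But there is one genuine gap, and it sits exactly where you flag it.

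The strict subadditivity $\f(\Ce)<\f(\Ce^{(1)})+\f(\Ce^{(2)})$ for \emph{all} nontrivial decompositions is the crux, and your proposed proof of it does not close. The dilation $u\mapsto u(\cdot/\sigma)$ gives at most $\f(\theta\Ce)<\theta\f(\Ce)$ for $\theta>1$, which only handles the \emph{proportional} splittings $\Ce=s\Ce+(1-s)\Ce$. For a system the splitting $\rho=\tau+(\rho-\tau)$ is a two-parameter family and the generic splitting is not proportional; the sentence ``the remaining splittings\ldots\ are handled using the coupling present in $\F$'' is precisely the missing argument, not a proof. The paper is explicit that the scalar-case rescaling trick is insufficient here (see the discussion preceding Section~\ref{sect:subadd}). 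The replacement the paper supplies is a \emph{quantitative} subadditivity with an explicit defect, $\f_\rho<\f_\tau+\f_{\rho-\tau}-D$ with $D>0$ depending only on $\rho,\tau$ (Proposition~\ref{lem:subadd}). This is obtained by first producing positive, radially symmetric, regular minimizers on large balls (Proposition~\ref{prop:bounded}), then gluing two such bumps with disjoint supports, Steiner-symmetrizing in one direction, and applying a one-dimensional rearrangement inequality (Lemma~\ref{lem:steiner}) that strictly decreases the Dirichlet energy by a controlled amount. A scaling argument cannot produce such a uniform gap $D$, nor does it see non-proportional splittings or the degenerate ones where a component of $\Ce^{(i)}$ vanishes. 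Unless and until that step is supplied, the dichotomy case is not excluded and your proof is incomplete.

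A smaller point: in your compactness conclusion you need the weak limit to be nontrivial in \emph{both} components in order to land in $\M$; Lions' lemma applied to $\sum_\kk\om_{n,\kk}|u_{n,\kk}|^2$ gives tightness of the total mass but does not by itself rule out an entire field from escaping unless the corresponding degenerate splitting has been excluded, which again rests on the full strength of the subadditivity. The paper sidesteps this by using the coupling directly: since $\f_\rho<0$ and $\G\geq 0$, $\liminf\nint|u_n^1 u_n^2|^\gamma>0$, and Lions' lemma applied to $w_n=u_n^1 u_n^2$ (Lemma~\ref{lem:tied-together}) immediately yields translations along which \emph{both} weak limits are nonzero. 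You may wish to adopt this device rather than running the trichotomy.
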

As in the scalar case in \cite{BBBM10}, compactness 
of minimizing sequences is proved for the simpler functional 
\[
\J(u) = \frac{1}{2}\sum_{\kk = 1} \sp 2\nint |\D u_\kk|\sp 2
+ \nint\F(u).
\]
on the constraint
\[
N_\rho := \{u\,|\,\no{u_\kk}_{L\sp 2 (\RN)} \sp 2 = \rho\}.
\]
In the scalar case, the compactness of the minimizing sequences of 
$ \J $, \cite{BBGM07}, is achieved by proving the sub-additivity property
of $ I_{\rho} := \inf_{N_\rho} J $, that is
\[
I_\rho < I_\tau + I_{\rho - \tau},\quad 0 < \tau < \rho.
\]
We follow the same approach. However, while in the scalar case, 
such inequality can be proved by rescaling two minimizing sequences in 
$ N_\tau $ and $ B_{\rho - \tau} $, a more effort is needed
for systems; we address this property to Section~\ref{sect:subadd}.
In Lemma~\ref{lem:subadd} we show that there exists $ D > 0 $, depending
only on $ \rho $ and $ \tau $ such that
\[
I_\rho < I_\tau + I_{\rho - \tau} - D.
\]
The inequality uses the symmetric decreasing rearrangement,
\cite{Kaw85}. The idea we follow is that, if two bumps 
$ u\in N_\tau $ and $ v\in N_{\rho - \tau} $ have small interaction,
then
\[
\no{\D w\sp *}\sp 2 < \no{\D u}\sp 2 + \no{\D v}\sp 2 - D
\]
where $ w = u + v $ and $ w\sp * $ is the symmetric rearrangement.
The second theorem concerns the properties of two subsets of
the phase space of \eqref{eq:nlkg0}, 
$ H\sp 1 (\RN,\C\sp 2)\oplus L\sp 2 (\RN,\C\sp 2) $.
To a minimizer $ (u,\om) $ of $ E $ over $ M_\Ce $ we can associate
\[
\Gamma(u,\om) =
\left\{
\begin{array}{c}
\left(\lambda u(\cdot + y),-i\om\lambda u (\cdot + y)\right)\\\\
(\lambda_1,\lambda_2,y)\in\C\times\C\times\RN,\quad 
|\lambda_1 | = |\lambda_2 | = 1
\end{array}
\right.,
\]
and to a constraint $ M_\Ce $, we can associate 
\begin{gather*}
\tag{GS}
\label{eq:GS}
\Gamma_\Ce = \bigcup\big\{\Gamma(u,\om)\,|\,(u,\om)\in K_\Ce\big\},
\end{gather*}
called \textsl{ground state}, where
\[
m_\Ce := \inf_{\M} E,\quad 
K_\Ce := \left\{(u,\om)\,|\,E(u,\om) = m_\Ce\right\}.
\]
\begin{theoremB}
\hypertarget{thm:B}{}
Given a sequence 
\[
(\Phi_n)_{n\geq 1}\subset 
H\sp 1 (\RN,\C\sp 2)\oplus L\sp 2 (\RN,\C\sp 2),
\]
then $ \dist(\Phi_n,\Gamma_{\Ce})\ra 0 $ if and only if
\[
\mathbf{E}(\Phi_n)\ra m_{\Ce},\quad
\mathbf{\Ce}_\kk (\Phi_n)\ra \Ce_\kk.
\]
for  $ 1\leq \kk\leq 2 $.
\end{theoremB}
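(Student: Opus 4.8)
\noindent I would dispatch the two implications separately. The implication \emph{``$\dist(\Phi_n,\Gm_\Ce)\ra 0$ implies $\EH(\Phi_n)\ra m_\Ce$ and $\CH_\kk(\Phi_n)\ra\Ce_\kk$''} is the elementary one: $\EH$ and $\CH_\kk$ are continuous on $H\sp 1(\RN,\C\sp 2)\oplus L\sp 2(\RN,\C\sp 2)$ (for $\EH$ through the subcritical growth of $\F$, for $\CH_\kk$ immediately) and translation invariant, and on each $\Gm(u,\om)$ with $(u,\om)\in K_\Ce$ a direct computation --- using $|\lambda_\kk|=1$ and the invariance of $\F$ under $\phi_\kk\mapsto e\sp{i\theta_\kk}\phi_\kk$ --- gives $\EH\equiv E(u,\om)=m_\Ce$ and $\CH_\kk\equiv\Ce_\kk$. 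Since $\Gm_\Ce$ is contained in a fixed bounded subset of the phase space (the relevant norms being bounded in terms of $m_\Ce$ by the coercivity of $E$ on $\M$), picking $g_n\in\Gm_\Ce$ with $\no{\Phi_n-g_n}\ra 0$ and using the local Lipschitz continuity of $\EH,\CH_\kk$ on bounded sets finishes this direction.

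For the converse, which carries the content, I would argue by contradiction: suppose $\EH(\Phi_n)\ra m_\Ce$, $\CH_\kk(\Phi_n)\ra\Ce_\kk$ ($\kk=1,2$) but, along a subsequence, $\dist(\Phi_n,\Gm_\Ce)\geq\var>0$. Write $\Phi_n=(\phi_n,\psi_n)$, $w_n\sp\kk:=|\phi_n\sp\kk|\in H\sp 1(\RN)$ and $\rho_n\sp\kk:=\no{\phi_n\sp\kk}_{L\sp 2}\sp 2=\no{w_n\sp\kk}_{L\sp 2}\sp 2$. The coercivity of $\EH$ coming from the hypotheses on $\F$, together with the bound on $\EH(\Phi_n)$, gives $\no{\phi_n}_{H\sp 1}$ and $\no{\psi_n}_{L\sp 2}$ bounded; since $\CH_\kk(\Phi_n)\ra\Ce_\kk\neq 0$ and $|\CH_\kk(\Phi_n)|\leq\no{\psi_n\sp\kk}_{L\sp 2}\sqrt{\rho_n\sp\kk}$, the $\rho_n\sp\kk$ are bounded away from $0$. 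Set $\om_n\sp\kk:=\Ce_\kk/\rho_n\sp\kk$, so $(w_n,\om_n)\in\M$. Combining the diamagnetic inequality $\no{\D w_n\sp\kk}_{L\sp 2}\leq\no{\D\phi_n\sp\kk}_{L\sp 2}$, the identity $\nint\F(w_n)=\nint\F(\phi_n)$ (phase invariance of $\F$), and
\[
(\om_n\sp\kk)\sp 2\rho_n\sp\kk=\frac{\Ce_\kk\sp 2}{\rho_n\sp\kk}=\frac{\Ce_\kk\sp 2}{\CH_\kk(\Phi_n)\sp 2}\cdot\frac{\CH_\kk(\Phi_n)\sp 2}{\rho_n\sp\kk}\leq\big(1+o(1)\big)\no{\psi_n\sp\kk}_{L\sp 2}\sp 2,
\]
one obtains $E(w_n,\om_n)\leq\EH(\Phi_n)+o(1)\ra m_\Ce$; as $(w_n,\om_n)\in\M$ forces $E(w_n,\om_n)\geq m_\Ce$, the pair $(w_n,\om_n)$ is a minimizing sequence for $E$ over $\M$, and $\EH(\Phi_n)-E(w_n,\om_n)\ra 0$ makes the (nonnegative up to $o(1)$) defects $\no{\D\phi_n\sp\kk}_{L\sp 2}\sp 2-\no{\D w_n\sp\kk}_{L\sp 2}\sp 2$ and $\no{\psi_n\sp\kk}_{L\sp 2}\sp 2-\CH_\kk(\Phi_n)\sp 2/\rho_n\sp\kk$ tend to $0$.

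Now Theorem~A applied to $(w_n,\om_n)$ yields $(u,\om)\in K_\Ce$ and $(y_n)\subset\RN$ with, along a further subsequence, $w_n\sp\kk=u_\kk(\cdot+y_n)+o(1)$ in $H\sp 1$ and $\om_n\ra\om$ in $\R\sp 2$. As $\dist(\cdot,\Gm_\Ce)$, $\EH$, $\CH_\kk$ are translation invariant we may translate so that $w_n\sp\kk\ra u_\kk$ in $H\sp 1$; here $u_\kk\geq 0$, $u_\kk\not\equiv 0$, and the positivity set of $u_\kk$ is connected --- in fact $u_\kk>0$ on $\RN$ by the strong maximum principle for the nonnegative solution $(u,\om)$ of \eqref{eq:ES}. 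The vanishing of the first defect gives $\no{\D\phi_n\sp\kk}_{L\sp 2}\ra\no{\D u_\kk}_{L\sp 2}$, hence $\no{\phi_n\sp\kk}_{H\sp 1}\ra\no{u_\kk}_{H\sp 1}$; a weak-limit argument (any weak $H\sp 1$ limit $\phi\sp*$ of a subsequence of $(\phi_n\sp\kk)$ has $|\phi\sp*|=u_\kk$ and saturates the diamagnetic inequality, hence $\phi\sp*=e\sp{i\alpha}u_\kk$ for a constant $\alpha$ by connectedness of $\{u_\kk>0\}$, and norm convergence upgrades weak to strong) shows that $\phi_n\sp\kk$ converges in $H\sp 1$ to the circle $\{e\sp{i\theta}u_\kk:\theta\in\R\}$, i.e. $\phi_n\sp\kk=e\sp{i\alpha_n\sp\kk}u_\kk+o(1)$ in $H\sp 1$ for suitable $\alpha_n\sp\kk\in\R$. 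The vanishing of the second defect means, in the $L\sp 2$ splitting $\psi_n\sp\kk=(\rho_n\sp\kk)\sp{-1}\spn{\psi_n\sp\kk,\phi_n\sp\kk}\phi_n\sp\kk+s_n\sp\kk$ with $\spn{s_n\sp\kk,\phi_n\sp\kk}=0$, that $\no{s_n\sp\kk}_{L\sp 2}\ra 0$ and $\re\spn{\psi_n\sp\kk,\phi_n\sp\kk}\ra 0$, while $\Img\spn{\psi_n\sp\kk,\phi_n\sp\kk}=-\CH_\kk(\Phi_n)\ra-\Ce_\kk$; since $\rho_n\sp\kk\ra\no{u_\kk}_{L\sp 2}\sp 2$ and $\om_\kk=\Ce_\kk/\no{u_\kk}_{L\sp 2}\sp 2$, this yields $\psi_n\sp\kk=-i\om_\kk e\sp{i\alpha_n\sp\kk}u_\kk+o(1)$ in $L\sp 2$. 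Hence, with $\lambda_n:=(e\sp{i\alpha_n\sp 1},e\sp{i\alpha_n\sp 2})$, the element $(\lambda_n u,-i\om\lambda_n u)$ of $\Gm(u,\om)\subset\Gm_\Ce$ is at $o(1)$ distance from $\Phi_n$ in $H\sp 1\oplus L\sp 2$, so $\dist(\Phi_n,\Gm_\Ce)\ra 0$, contradicting the choice of subsequence.

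The main obstacle is this last paragraph. The auxiliary minimization and Theorem~A control only the \emph{moduli} $|\phi_n\sp\kk|$; to recover the phases of $\phi_n$ and the whole profile of $\psi_n$ one needs the precise description of the near-equality cases in the diamagnetic and Cauchy--Schwarz inequalities, the connectedness of the positivity set of the ground-state profile (available because the compactness in Theorem~A is obtained through the symmetric decreasing rearrangement), and some care with the phases $\alpha_n\sp\kk$, which need not converge --- so the final statement is convergence of $\Phi_n$ to the orbit $\Gm_\Ce$, not to a point. A subsidiary point hidden in ``coercivity of $\EH$'' is extracting the $H\sp 1$-bound on $\phi_n$ and the lower bound on $\rho_n\sp\kk$ from the hypotheses on $\F$; and it should be recorded that the phase invariance of $\F$ is precisely what makes the replacement of $\phi_n$ by $w_n=|\phi_n|$ legitimate.
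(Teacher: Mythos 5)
Your proof is correct and follows essentially the same strategy as the paper's: pass to the moduli $w_n^\kk = |\phi_n^\kk|$ and the natural frequencies $\om_n^\kk = \Ce_\kk/\rho_n^\kk$, use the diamagnetic inequality and the phase-invariance (A3) to show $E(w_n,\om_n)\leq\EH(\Phi_n)+o(1)$, feed $(w_n,\om_n)$ into Theorem~A, and then recover the phases from the equality cases in Lemma~\ref{lem:inequalities} and in Cauchy--Schwarz. Two minor presentational differences are worth noting. First, you wrap the argument in a contradiction with a subsequence on which $\dist(\Phi_n,\Gm_\Ce)\geq\var$; the paper runs the same argument directly (modulo the usual ``every subsequence has a further subsequence with $\dist\to0$'' bookkeeping), so your wrapper is harmless but unnecessary. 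Second, for the time-derivative component, the paper translates, passes to a weak limit $(\psi,\psi_t)$ in the phase space, and reads $\psi_t^\kk=-i\om_\kk\psi_\kk$ off the saturated Cauchy--Schwarz inequality for that fixed limit; you instead decompose $\psi_n^\kk$ orthogonally along $\phi_n^\kk$ in $L\sp2$ and watch the tangential coefficient, its real part, and the orthogonal remainder $s_n^\kk$ go to their limits separately. This is a slightly tidier piece of bookkeeping and produces the same conclusion. Your closing remarks correctly identify the technical crux (positivity and connectedness of $\{u_\kk>0\}$, needed so that the equality case in the diamagnetic inequality pins down a single global phase), which the paper resolves exactly as you indicate, by the strong maximum principle as in Proposition~\ref{prop:bounded}.
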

A proof of this theorem in the scalar case can be found in
\cite{BBBM10} under the assumption that the NLKG is locally well-posed.
In our proof we drop this assumption. The keypoint of the proof
lies in the following property:
given $ \phi\in H\sp 1 (\RN,\C) $ such that $ |\phi| > 0 $ everywhere and
\[
\no{\D\phi} = \no{\D|\phi|}
\]
there exists $ \lambda\in\C $ such that $ |\lambda| = 1 $ and
$ \phi = \lambda|\phi| $.
We show this in Lemma~\ref{lem:inequalities} for
$ H\sp 1 (\RN,\R\sp m) $ and $ m\geq 1 $. A similar
property is shown in \cite[Theorem~7.8]{LL01} under the stronger
assumption that $ |\phi_\kk| > 0 $ 
for some $ 1\leq\kk\leq m $.\vskip .3em
The non-linear term $ \F $ is assumed to be continuously differentiable 
with subcritical growth and can be written as non-negative
perturbation of a coupling term
\[
\F(u) = -\cg |u_1 u_2|\sp\gamma + \G,\quad\G\geq 0.
\]
Theorems~\hyperlink{thm:A}{A} and \hyperlink{thm:B}{B} are addressed
to the proof that $ \Gamma_\Ce $ is a stable subset of the phase
space
\[
\Y := H\sp 1 (\RN,\C\sp 2)\oplus L\sp 2 (\RN,\C\sp 2).
\]
Thus, it is very natural to ask whether we have local existence of
solutions to \eqref{eq:nlkg0} with initial data in $ \Y $.
However, from known results on the non-linear scalar wave equation,
we can expect local existence only
\[
\Y_k := H\sp k (\RN,\C\sp 2)\oplus H\sp{k - 1} (\RN,\C\sp 2),\quad k > n/2
\]
with the general assumptions we make on $ \F $. Moreover, for $ k = 1 $ 
even conservation laws \eqref{eq:EN} and 
\eqref{eq:C} are not known to hold for every non-linearity. 
In order to obtain the stability of $ \Gamma(u,\om) $ it seems that
the non-degeneracy condition
\[
\{(u(\cdot + y),\om)\,|\,y\in\RN\}\text{ is isolated in } K_\Ce
\]
is rather necessary. We do not tackle in this work the problem of the 
existence of local solutions and the non-degeneracy condition.

Numerical results on the existence of standing-waves have been
obtained in \cite{BH08} when $ \N = 3 $ and critical exponents.

\textbf{Acknowledgements.}
I would like to thank professor Vieri Benci and professor 
Jaeyoung Byeon for their aid and helpful suggestions, as well as
professor Claudio Bonanno and Jacopo Bellazzini.
\section{Regularity properties}
\noindent We fix $ \N\geq 3 $ and recall the well-known inequalities
for a function $ u\in H\sp 1 (\RN) $
\begin{gather}
\label{eq:sob-1}
\no{u}_{L\sp {2\sp *}}\leq S\no{\D u}_{L\sp 2}\\
\label{eq:sob-2}
\no{u}_{L\sp p}\leq \no{u}_{L\sp 2}
+ S\no{\D u}_{L\sp 2}
\end{gather}
for some $ S > 0 $ and for every $ 2\leq p\leq 2\sp * $
and $ 2\sp * = 2\N/\N - 2 $, check \cite[Corollaire~IX.10,p.\,165]{Bre83}.
Given an integer $ m $, we set
\[
\HH := \bigoplus_{k = 1} \sp m H\sp 1 (\RN).
\]
Given $ u\in L\sp p (\RN,\R\sp m) $, we also set
\[
\no{u}_p = \no{u}_{L\sp p},\quad \no{u} = \no{u}_{L\sp 2}.
\]
On $ \HH $ we consider the norm defined as 
\[
\no{u}_{\HH} \sp 2 := \sum_{\kk = 1} \sp m \no{u_\kk}\sp 2 + 
\no{\D u_\kk}\sp 2.
\]
\begin{defn}
A real-valued function $ \F\colon \RN\ra\R\sp m $ is a
\textsl{combined power-type} if there exists a constant $ c > 0 $ and
$ p\leq q $ such that 
\[
|\F(u)|\leq c(|u|\sp p + |u|\sp q)
\]
for every  $ u\in\RN $. If $ p = q $, we say that $ F $ is a
\textsl{power-type}.
\end{defn}
\begin{prop}
\label{prop:cpt}
Let $ \F\colon\RN\ra\R\sp m $ be a differentiable function such
that $ \F(0) = 0 $ and there are $ p\leq q $ such that
\begin{equation}
\label{eq:prop:cpt-1}
|\D \F(u)|\leq \co (|u|\sp{p - 1} + |u|\sp{q - 1}).
\end{equation}
Then, there are $ F_p,F_q\colon\RN\ra\R\sp m $ differentiable such that
$ F_p (0) = F_q (0) = 0 $ and 
\begin{gather*}
\F = \F_p + \F_q\\
|\D \F_p (u)|\leq c_{p - 1} |u|\sp{p - 1},\quad |\D \F_q (u)|\leq c_{q - 1}
|u|\sp{q - 1}
\end{gather*}
\end{prop}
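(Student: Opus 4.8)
The plan is to split $\F$ by a smooth radial cut-off. Fix a function $\chi\in C^\infty([0,\infty);[0,1])$ with $\chi\equiv 1$ on $[0,1]$ and $\chi\equiv 0$ on $[2,\infty)$, and set
\[
\F_p(u):=\chi(|u|)\,\F(u),\qquad \F_q(u):=\bigl(1-\chi(|u|)\bigr)\F(u).
\]
Then $\F=\F_p+\F_q$ and, since $\F(0)=0$, also $\F_p(0)=\F_q(0)=0$. Away from the origin $u\mapsto|u|$ is smooth, so $\F_p,\F_q$ are differentiable there; on the ball $\{|u|<1\}$ one has $\F_p=\F$ and $\F_q\equiv 0$, so both are differentiable at the origin as well. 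Hence the only real content is the two gradient bounds.

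First I would record the pointwise growth of $\F$ itself. Integrating $\D\F$ along the segment $t\mapsto tu$, $t\in[0,1]$, and using $\F(0)=0$ together with \eqref{eq:prop:cpt-1} gives
\[
|\F(u)|\le \co\int_0^1\bigl(t^{p-1}|u|^{p-1}+t^{q-1}|u|^{q-1}\bigr)|u|\,dt= \co\Bigl(\tfrac{|u|^{p}}{p}+\tfrac{|u|^{q}}{q}\Bigr)\le C\bigl(|u|^{p}+|u|^{q}\bigr).
\]
This is the only place $\F(0)=0$ is used beyond fixing the value at the origin.

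Then, for $u\ne 0$,
\[
\D\F_p(u)=\chi'(|u|)\,\tfrac{u}{|u|}\,\F(u)+\chi(|u|)\,\D\F(u),\qquad \D\F_q(u)=-\chi'(|u|)\,\tfrac{u}{|u|}\,\F(u)+\bigl(1-\chi(|u|)\bigr)\D\F(u),
\]
and both vanish identically for $|u|>2$, resp. $|u|<1$. For $\F_p$ everything is supported in $\{|u|\le 2\}$, where $|u|^{q-1}\le 2^{q-p}|u|^{p-1}$ and, on $\supp\chi'\subset\{1\le|u|\le 2\}$, $|\F(u)|\le C(|u|^p+|u|^q)\le C'|u|^{p-1}$; combining these with \eqref{eq:prop:cpt-1} yields $|\D\F_p(u)|\le c_{p-1}|u|^{p-1}$. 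Symmetrically, $\D\F_q(u)$ is supported in $\{|u|\ge 1\}$, where $|u|^{p-1}\le|u|^{q-1}$, and on $\supp\chi'$ one has $|\F(u)|\le C(|u|^p+|u|^q)\le C''|u|^{q-1}$; hence $|\D\F_q(u)|\le c_{q-1}|u|^{q-1}$. At $u=0$ both inequalities hold trivially: $\D\F(0)=0$ when $p>1$ by \eqref{eq:prop:cpt-1}, while if $p=1$ the right-hand side $c_{p-1}|u|^{0}$ is just a positive constant.

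I do not expect a serious obstacle. The two points needing attention are (i) differentiability at the origin, which is exactly why $\chi$ must be taken constant near $0$; and (ii) the fact that the two target exponents differ — $\F_p$ has to be controlled by the \emph{smaller} exponent $p-1$, which works because $\D\F_p$ is supported in a bounded region, while $\F_q$ has to be controlled by the \emph{larger} exponent $q-1$, which works because $\D\F_q$ is supported away from the origin. The cross-term $\chi'(|u|)\tfrac{u}{|u|}\F(u)$ appearing in both derivatives is where the pointwise estimate on $|\F|$ is used, and it is harmless since $\chi'$ lives in the annulus $\{1\le|u|\le 2\}$.
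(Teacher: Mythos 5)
Your proof is correct and follows essentially the same route as the paper: both decompose $\F$ by a cutoff function (your $\chi(|u|)$ plays the role of the paper's $\eta$), use the Fundamental Theorem of Calculus together with $\F(0)=0$ to get the pointwise bound $|\F(u)|\le C(|u|^p+|u|^q)$, and then exploit the support constraints on $\chi$ and $\chi'$ to absorb the wrong power into the right one. The only cosmetic difference is that you phrase $\chi$ as a smooth radial cutoff while the paper takes $\eta\in C^1(\R^m)$ directly; this does not change the argument.
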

\begin{proof}
Because $ \F(0) = 0 $, and by the Fundamental Theorem of the Calculus, 
it follows from the hypotheses that 
\begin{equation}
\label{eq:combinedpt}
|\F(u)|\leq\cz (|u|\sp{p} + |u|\sp{q}) 
\end{equation}
in fact $ \cz $ could be chosen to be $ \sqrt{m}\co/p $. 
Let $ \eta\in C\sp 1 (\R\sp m,\R) $ be a non-negative function such that
\[
\eta(u) =
\begin{cases}
1 & \text{ if } |u|\leq 1\\
0 & \text{ if } |u|\geq 2
\end{cases}
\]
with $ \eta\leq 1 $ and $ |\D\eta|\leq 2 $. 
On $ B(0,2) $, by \eqref{eq:prop:cpt-1}, we have 
\[
\begin{split}
|\eta \D F(u)|\leq& \co (|u|\sp{p - 1} + |u|\sp{q - 1})
= \co\big[|u|\sp{p - 1} + 2\sp {q - 1} (|u|/2)\sp{q - 1}\big]\\
\leq & \co(|u|\sp{p - 1} + 2\sp{q - p} |u|\sp{p - 1}) 
= \co (1 + 2\sp{q - p}) |u|\sp{p - 1}.
\end{split}
\]
The second inequality follows from $ p\leq q $. 
Because $ \eta $ vanishes outside $ B(0,2) $ the inequality above holds in 
$ \R\sp m $. On the anulus $ C(1,2) $ we have
\[
\begin{split}
|F\D\eta|\leq& 2\cz (|u|\sp{p - 1} + |u|\sp{q - 1})\leq 
2\cz (1 + 2\sp{q - p - 2}) |u|\sp{p - 1}.
\end{split}
\]
Since $ \D\eta $ vanishes outside $ C(1,2) $, the inequality above
holds in $ \R\sp m $. Combining the last two inequalities, we prove that
$ \D (\eta F) $ is power-type. Similarly, one shows that
\[
|\D(1 - \eta) F|\leq 2(\co + 2\cz) |u|\sp{q - 1}.
\]
We set $ F_p := \eta F $ and $ F_q := (1 - \eta) F $. Thus,
\begin{equation}
\label{eq:pt}
\F = \F_p + \F_q
\end{equation}
is the desired decomposition.
\end{proof}
Let $ \F $ be a real-valued continuously differentiable function on 
$ \R\sp m $ such that
\begin{equation}
\label{eq:combinedptd}
|\D F(u)|\leq \co
(|u|\sp{p - 1} + |u|\sp{q - 1}),\quad \F(0) = 0,\quad
2\leq p\leq q\leq 2\sp *
\end{equation}
for every $ u\in\R\sp m $. For every 
$ u\in\HH $ we have $ F(u)\in L\sp 1 (\RN) $ from inequalities
\eqref{eq:sob-2} and \eqref{eq:combinedpt}.
Therefore we have a well-defined functional
\begin{gather*}
\J\colon \HH\ra\R\\
\J(u) = \frac{1}{2}\nint |\D u|\sp 2
+ \nint\F(u).
\end{gather*}
\begin{prop}
\label{prop:smooth}
The functional $ J $ defined above satisfies the following:
\begin{itemize}
\item[(a)] $ J $ is of class $ C\sp 1 (\HH,\R) $,
\item[(b)] if $ q < 2\sp * $, given a weakly converging sequence 
$ u_n\rightharpoonup u $ in $ \HH $, up to extract a subsequence, we
have
\begin{gather*}
J(u_n - u) = J(u_n) - J(u) + o(1).
\end{gather*}
\end{itemize}
\end{prop}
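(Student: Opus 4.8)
The plan is to treat the two summands of $\J$ separately and, for the nonlinear part $\Psi(u):=\nint\F(u)$, to reduce to power-type nonlinearities by splitting $\F=\F_p+\F_q$ as in Proposition~\ref{prop:cpt}, so that $|\D\F_p(u)|\leq c_{p-1}|u|\sp{p-1}$ and $|\D\F_q(u)|\leq c_{q-1}|u|\sp{q-1}$. For part (a): the quadratic part $u\mapsto\frac{1}{2}\nint|\D u|\sp 2$ is a bounded quadratic form on $\HH$, hence $C\sp 1$, with differential $v\mapsto\nint\D u\cdot\D v$, so it remains to handle $\Psi$. First I would check that $v\mapsto\nint\D\F(u)\cdot v$ is a bounded linear functional on $\HH$: by~\eqref{eq:combinedptd}, $|\D\F(u)\cdot v|\leq\co(|u|\sp{p-1}+|u|\sp{q-1})|v|$, which is integrable with $\big|\nint\D\F(u)\cdot v\big|\leq\co(\no{u}_p\sp{p-1}\no{v}_p+\no{u}_q\sp{q-1}\no{v}_q)$ by Hölder's inequality and~\eqref{eq:sob-2}. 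Gateaux differentiability of $\Psi$ at $u$ in the direction $v$ then follows from
\[
\frac{\F(u+tv)-\F(u)}{t}=\int_0\sp 1\D\F(u+stv)\cdot v\,ds
\]
by letting $t\to 0$ under $\nint$ via dominated convergence, the integrand being dominated for $|t|\leq 1$ by $\co(\,|u|+|v|\,)\sp{p-1}|v|+\co(\,|u|+|v|\,)\sp{q-1}|v|\in L\sp 1(\RN)$. To upgrade to $C\sp 1$ it suffices to show that $u\mapsto\D\F(u)$ is continuous from $\HH$ into its dual; by the splitting this reduces to the continuity of the Nemytskii operators $u\mapsto\D\F_p(u)$ from $L\sp p$ to $L\sp{p/(p-1)}$ and $u\mapsto\D\F_q(u)$ from $L\sp q$ to $L\sp{q/(q-1)}$ — standard for operators of matching power growth — composed with the continuous embeddings $\HH\hookrightarrow L\sp p$ and $\HH\hookrightarrow L\sp q$. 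A Gateaux-differentiable map with continuous differential is $C\sp 1$, so $\J\in C\sp 1(\HH,\R)$ with $\J'(u)[v]=\nint\D u\cdot\D v+\nint\D\F(u)\cdot v$.

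\textbf{Part (b).} For the quadratic part the identity is exact: expanding the square,
\[
\frac{1}{2}\nint|\D(u_n-u)|\sp 2-\frac{1}{2}\nint|\D u_n|\sp 2+\frac{1}{2}\nint|\D u|\sp 2=-\nint\D u\cdot\D(u_n-u)\longrightarrow 0,
\]
since $u_n\wk u$ in $\HH$ gives $\D u_n\wk\D u$ in $L\sp 2$. So it remains to prove $\nint[\F(u_n)-\F(u_n-u)-\F(u)]\to 0$ along a subsequence. As $(u_n)$ is bounded in $\HH$, by~\eqref{eq:sob-2} both $(u_n)$ and $(u_n-u)$ are bounded in $L\sp p$ and in $L\sp q$; and, because $q<2\sp*$, Rellich's theorem gives, after passing to a subsequence, $u_n\to u$ almost everywhere on $\RN$. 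From~\eqref{eq:combinedptd} and the fundamental theorem of calculus one obtains $|\F(a+b)-\F(a)|\leq C(|a|\sp{p-1}+|b|\sp{p-1}+|a|\sp{q-1}+|b|\sp{q-1})|b|$, and combining with~\eqref{eq:combinedpt} and Young's inequality: for every $\var>0$ there is $C_\var>0$ such that
\[
|\F(a+b)-\F(a)-\F(b)|\leq\var(|a|\sp p+|a|\sp q)+C_\var(|b|\sp p+|b|\sp q),\qquad a,b\in\R\sp m.
\]
Applying this with $a=u_n-u$, $b=u$, set $g_n:=\big(|\F(u_n)-\F(u_n-u)-\F(u)|-\var(|u_n-u|\sp p+|u_n-u|\sp q)\big)\sp+$. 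Then $0\leq g_n\leq C_\var(|u|\sp p+|u|\sp q)\in L\sp 1(\RN)$ and $g_n\to 0$ a.e., so $\nint g_n\to 0$ by dominated convergence; since $|\F(u_n)-\F(u_n-u)-\F(u)|\leq g_n+\var(|u_n-u|\sp p+|u_n-u|\sp q)$ and $\sup_n(\no{u_n-u}_p\sp p+\no{u_n-u}_q\sp q)<\infty$, integration gives $\limsup_n\nint|\F(u_n)-\F(u_n-u)-\F(u)|\leq C\var$, and $\var\to 0$ finishes the proof.

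\textbf{Where the difficulty lies.} Part (b) is a Brezis--Lieb type argument whose only substantial inputs are the algebraic inequality above and the almost-everywhere convergence, the hypothesis $q<2\sp*$ entering precisely through the compactness of $\HH\hookrightarrow L\sp q_{\mathrm{loc}}$ used to extract the latter. The genuinely delicate routine point is the continuity of $u\mapsto\D\F(u)$ in part (a), and in particular the bookkeeping at the critical exponent $q=2\sp*$, where $\HH\hookrightarrow L\sp{2\sp*}$ is not compact: there one must invoke the continuity — not any compactness — of the associated Nemytskii operator between $L\sp{2\sp*}$ and its dual, which is exactly why reducing to matching power-type pieces via Proposition~\ref{prop:cpt} is convenient.
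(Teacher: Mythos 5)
Your proof is correct. For part (a) you take essentially the same route as the paper: reduce to power-type nonlinearities via Proposition~\ref{prop:cpt}, use the fundamental theorem of calculus and H\"older to identify the candidate differential, and prove continuity of the Nemytskii operators $u\mapsto\D_\kk\F(u)$ from $\HH$ into $L\sp{p'}$; the only cosmetic difference is that you pass through Gateaux differentiability and then invoke continuity of the differential to upgrade, whereas the paper estimates the Fr\'echet remainder directly.

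Part (b) is a genuinely different argument. The paper follows \cite[Appendix]{BBGM07}: for each fixed $\var>0$ it splits the integral over a large ball $B_{R(\var)}$ and its complement, bounds the exterior contribution by the $L\sp p$-tail of the fixed profile $u$, bounds the interior contribution via Rellich compactness of $H\sp 1(B_R)\hookrightarrow L\sp p(B_R)$, and then runs a diagonal extraction over $\var_k=1/k$. You instead give a Brezis--Lieb argument: the pointwise inequality
\[
|\F(a+b)-\F(a)-\F(b)|\leq\var\bigl(|a|\sp p+|a|\sp q\bigr)+C_\var\bigl(|b|\sp p+|b|\sp q\bigr),
\]
applied with $a=u_n-u$, $b=u$, followed by dominated convergence for $g_n=\bigl(|\F(u_n)-\F(u_n-u)-\F(u)|-\var(|u_n-u|\sp p+|u_n-u|\sp q)\bigr)\sp+$ and boundedness of $(u_n-u)$ in $L\sp p\cap L\sp q$. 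This is tighter --- the single a.e.\ convergent subsequence replaces the diagonal extraction, and the argument actually works up to $q=2\sp*$, since the a.e.\ convergence only needs local $L\sp 2$-compactness of $H\sp 1$. Your closing remark therefore slightly misplaces where $q<2\sp*$ bites: it is the paper's spatial-splitting argument (which needs compactness of $H\sp 1(B_R)\hookrightarrow L\sp q(B_R)$), not yours, that uses it. Both proofs are standard and both establish the claim.
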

The proof of (a) uses the same techinque of \cite[Theorem~2.2]{AP93} and
\cite[Theorem~2.6]{AP93} which deals with bounded domains. In fact,
such restriction is not necessary; the proof of (b) is the same as
the scalar case of \cite[Appendix]{BBGM07}.
\begin{proof}
(a).~Since the map $ u\mapsto\no{\D u}_2 \sp 2 $ is smooth on $ \HH $,
we only need to prove that
\[
\mathscr{\F}(u) := \nint \F(u)
\]
is $ C\sp 1 (\HH) $. Moreover, by Proposition~\ref{prop:cpt}, we
can suppose that $ \F $ and $ \D F $ are power type non-linearities
and
\begin{equation}
\label{eq:cpt}
|\D \F (u)|\leq \co |u|\sp{p - 1},\quad |\F(u)|\leq \cz |u|\sp p.
\end{equation}
From the first of the two inequalities above, $ |\D\F(u)| $ is
in $ L\sp{p'} $, where $ p' = p/(p - 1) $. Because 
$ |\D_\kk \F (u)|\leq |\D \F (u)| $ 
the application
\begin{equation}
\mathscr{G}_\kk\colon H\sp 1\ra L\sp{p'},\quad u\mapsto \D_\kk \F (u).
\end{equation}
is well defined for every $ 1\leq \kk\leq m $.
We prove that it is also continuous. To this end,
let $ u_n\ra u $ be a converging sequence in $ \HH $; we show
that $ \mathscr{G}_\kk (u_n) $ has a converging subsequence and that
all the converging subsequences have the same limit 
$ \mathscr{G}_\kk (u) $. Thus,
\[
\mathscr{G}_\kk (u_n)\ra\mathscr{G}_\kk (u).
\]
Up to extract a subsequence, we can suppose that there exists 
$ v\in L\sp p (\RN) $ such that
\[
u_n\ra u,\quad |u_n \sp\kk|\leq v
\]
almost everywhere, for $ 1\leq\kk\leq m $.
Because $ \D_\kk\F $ is continuous, by the convergence above, we have
\begin{gather*}
\D_\kk \F (u_n)\ra \D_\kk \F (u)\text{ pointwise a.e.}\\
|\D_\kk \F (u_n) - \D_\kk \F (u)|\sp{p'}\leq (2\co)\sp{p'} 
|v|\sp p\in L\sp 1 (\RN).
\end{gather*}
Thus, by the dominate convergence theorem, we obtain the convergence
of $ \mathscr{G}_\kk (u_n) $ to $ \mathscr{G}(u) $.
Now, for every $ u\in\HH $, we consider the linear functional
\begin{gather}
L_u\colon \HH\ra\R\\
\label{eq:differential}
L_u (\vfi) := \sum_{\kk = 1} \sp m \nint \D_\kk \F (u) \vfi_\kk,
\end{gather}
which is well-defined and bounded by the H\"older inequality. 
Next, we show that
\[
\mathscr{\F} (u + \vfi) - \mathscr{\F} (u) - L_u (\vfi) = o(\vfi).
\]
We prove the convergence above on sequences $ \vfi_n\ra 0 $. 
The left term equals
\[
\begin{split}
&\nint\F (u + \vfi_n) - \F (u) - L_u (\vfi_n) \\
=& \int_0 \sp 1 \nint\langle\D\F (u + t\vfi_n),\vfi\rangle - L_u (\vfi_n) \\
=&\sum_{\kk = 1} \sp m\int_0 \sp 1 \nint 
\left(\D_\kk \F (u + t\vfi_n) - \D_\kk \F (u)\right)\vfi_n \sp \kk.
\end{split}
\]
Thus, by the definition of $ \mathscr{G}_\kk $ and the H\"older inequality,
we obtain
\[
\begin{split}
&\left|\nint\F (u + \vfi_n) - \F (u) - L_u (\vfi_n)\right| \\
\leq&\sum_{\kk = 1} \sp m \no{\vfi_n \sp \kk}_p 
\int_0 \sp 1 \no{\mathscr{G}_\kk (u + t\vfi_n) - 
\mathscr{G}_\kk (u)}_{p'}\\
\leq&\sqrt{1\vee S}\,\no{\vfi_n}_{\HH} \left[\sum_{\kk = 1} \sp m
\left(\int_0 \sp 1 \no{\mathscr{G}_\kk (u + t\vfi_n) - 
\mathscr{G}_\kk (u)}_{p'}\right)\sp 2 \right]\sp{1/2}
\end{split}
\]
where the second inequality still follows from \eqref{eq:sob-2}
and the Schwarz inequality for the eucliden product on $ \R\sp m $.
By the continuity of $ \mathscr{G}_\kk $, the functions
\[
g_n \sp\kk (t) := \no{\mathscr{G}_\kk (u + t\vfi_n) - 
\mathscr{G}_\kk (u)}_{p'}
\]
are continuous on the unit interval and converge pointwise. Because
the sequence $ \vfi_n $ is bounded in $ \HH $, they
are also uniformly bounded from above. Then by the dominated convergence 
theorem, we have
\[
\int_0 \sp 1 g_n \sp \kk (t) \ra 0.
\]
Thus, the last term of the previous inequality is $ o(1) \no{\vfi}_\HH $
which proves that $ \mathscr{\F} $ is continuous and differentiable in $ u $
and
\[
\D\mathscr{\F} (u) = L_u.
\]
Finally, we observe that by the continuity of $ \mathscr{G}_\kk $ and
the definition of $ L_u $ in \eqref{eq:differential}, the map
\[
\D\mathscr{\F}\colon \HH\ra\mathcal{L}(\HH,\HH\sp*)
\]
is continuous. Thus, $ \mathscr{\F}\in C\sp 1 (\HH,\R) $. 
Then, $ \J\in C\sp 1 (\HH,\R) $.
\vskip .1em
\noindent(b).~For $ u\mapsto\no{\D u}\sp 2 $ the property follows easily from
\[
\begin{split}
\no{\D(u_n - u)}\sp 2 =& \no{\D u_n}\sp 2 + \no{\D u}\sp 2 - 2(\D u_n,\D u)\\
=& \no{\D u_n}\sp 2 - \no{\D u}\sp 2 + o(1).
\end{split}
\]
Again, from Proposition~\ref{prop:cpt} we can suppose that \eqref{eq:cpt}
holds. We set $ v_n := u_n - u $. 
Let us fix $ \var > 0 $. We prove that there exists a subsequence
of $ (v_n) $ such that
\[
\lim_{n\ra +\infty}
|\mathscr{\F} (u + v_n) - \mathscr{\F} (u) - \mathscr{\F} (v_n)| < \var.
\]
Given $ R > 0 $, we have
\[
\begin{split}
&\mathscr{\F} (u + v_n) - \mathscr{\F} (u) - \mathscr{\F} (v_n) \\
=&\nint \F (u + v_n) - \F (u) - \nint \F (v_n) \\
=&\int_{B_R} \F (u + v_n) - \F (u) - \int_{B_R} \F (v_n) \\
+&\int_{B_R \sp c} \F (u + v_n) - \F (v_n) - \int_{B_R \sp c} \F (u)
=: A + B
\end{split}
\]
We estimate separately the summands of the last term of the equality.
Since $ \F $ differentiable, we have
\[
\begin{split}
B = &\int_{B_R \sp c} \F (u + v_n) - \F (v_n) - \int_{B_R \sp c} \F (u)\\
=&\int_{B_R \sp c} \int_0 \sp 1 \langle\D\F (v_n + tu),u\rangle - 
\int_{B_R \sp c} \F (u) =: B_1 + B_2.
\end{split}
\]
By \eqref{eq:cpt} and 
the H\"older inequality, we have
\begin{gather*}
  |B_1| \leq \co\int_{B_R \sp c} |u + tv_n|\sp{p - 1} |u|
\leq \co\sup_{n,t} \no{u + tv_n}_{L\sp p (B_R \sp c)} \sp{p - 1}
\,\no{u}_{L\sp p (B_R \sp c)}\\
|B_2|\leq \cz \no{u}_{L\sp p (B_R \sp c)} \sp p.
\end{gather*}
Because $ v_n $ converges weakly, the supremum above is finite.
Since $ u\in L\sp p (\RN) $, there exists $ R(\var) $ such that 
$ |B_i|\leq\var/4 $. Similarly, we have
\[
\begin{split}
A = 
&\int_{B_{R(\var)}} \F (u + v_n) - \F (u) - 
\int_{B_{R(\var)}} \F (v_n) \\
=& \int_{B_{R(\var)}} \int_0 \sp 1 \langle\D\F (u + tv_n),v_n\rangle -
\int_{B_{R(\var)}} \F (v_n) =: A_1 + A_2.
\end{split}
\]
From \eqref{eq:cpt}, we have
\begin{gather*}
|A_1|\leq\co\int_{B_{R(\var)}} |u + tv_n|\sp{p - 1} |v_n|\leq
\co \sup_{n,t} \no{u + tv_n}_{L\sp p (B_{R(\var)})} \sp{p - 1}
\,\no{v_n}_{L\sp p (B_{R(\var)})}\\
|A_2|\leq\cz\no{v_n}_{L\sp p (B_{R(\var)})} \sp p.
\end{gather*}
Because $ p < 2\sp * $ the inclusion of
$ L\sp p (B_{R(\var)}) $ in $ H\sp 1 (B_{R(\var)}) $ is compact,
\cite[Th\'eor\`eme~IX.16,p.\,169]{Bre83}. Thus, we
can extract a subsequence such that $ v_n \ra 0 $ in $ L\sp p (B_{R(\var)}) $. 
If we choose $ n $ large enough, we obtain $ |A_i|\leq\var/4 $. 
\vskip .4em
\noindent
If we repeat the same argument for $ \var_k = 1/k $ we obtain
subsequences 
\[
(v_{n,k})\subseteq\dots\subseteq (v_{n,2})\subseteq (v_{n,1}).
\]
Let $ n_k $ be such that
\[
|\mathscr{\F} (u + v_{n_k,k}) - \mathscr{\F} (u) - \mathscr{\F} (v_{n_k,k})| 
< \frac{1}{k}.
\]
Thus $ w_k := v_{n_k,k} $ is a subsequence of $ (v_n) $. Then, the
sequence $ u_{n_k} := w_k + u $ is a subsequence of $ (u_n) $ and
satisfies the required properties.
\end{proof}
\section{The variational setting}
\noindent 
Throught this and the next sections we assume $ m = 2 $ and the following
properties on $ \F $:
\begin{align}
\label{eq:A1}
\tag{A1}
&\F(u) = - \cg |u_1 u_2|\sp\gamma + \G(u),\quad 1 < \gamma < 1 + 2/\N,\\
\label{eq:A2}
\tag{A2}
&|\D \G(u)| \leq \co(|u|\sp{p - 1} + |u|\sp{q - 1}),
\quad 2\gamma < p\leq q < 2\sp *,\\
\label{eq:A3}
\tag{A3}
&\G(u) = \G(|u_1|,|u_2|),\quad G\geq 0.
\end{align}
Finally, we assume that $ \G $ is well-behaved with respect to the
Steiner rearrangement. That is, given $ u_1,u_2\in H\sp 1 (\RN) $,
and denoting by $ u_1 \sp * $ and $ u_2 \sp * $ their Steiner 
symmetric rearrangements (check \cite{Kaw85}), we have
\begin{equation}
\label{eq:A4}
\tag{A4}
\nint G(u_1 \sp *,u_2 \sp *)\leq\nint G(u_1,u_2).
\end{equation}
\renewcommand{\theequation}{\arabic{equation}}
The assumptions (\ref{eq:A1},\ref{eq:A2}) are the natural 
extension of the hypothesis ($ F_p,F_0 $) and ($ F_2 $) made in the scalar 
case by V.~Benci,~M.~Ghimenti 
\textsl{et al.},~\cite{BBGM07}. The proof of the next Lemma, which we
include for the sake of completeness, is similar to the ones of
\cite[Lemma~5,Proposition~7]{BBGM07}.
\begin{lemma}
\label{lem:general}
For every $ \rho\in\R\sp 2 $ with $ \rho_\kk > 0 $, we have 
\begin{enumerate}
\item $ \inf_{N_\rho} \J =: \f_\rho $ is finite and negative,
\item minimizing sequences of $ \J $ on $ N_{\rho} $ are bounded,
\end{enumerate}
\end{lemma}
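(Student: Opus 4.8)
\noindent The plan is to isolate one coercivity estimate on $N_\rho$ --- a consequence of \eqref{eq:A1} and \eqref{eq:A3} --- from which the finiteness of $I_\rho$ and the boundedness of minimizing sequences follow at once, and then to exhibit a spreading test family witnessing $I_\rho<0$. For the estimate I would show that there is $K=K(\rho)>0$ with $J(u)\geq\tfrac14\sum_{\kk=1}^{2}\no{Du_\kk}_2^{2}-K$ for every $u\in N_\rho$. Indeed, since $\G\geq 0$ by \eqref{eq:A3} and $\F(u)=-\cg|u_1u_2|^{\gamma}+\G(u)$ with $\cg>0$, one has $J(u)\geq\tfrac12\sum_\kk\no{Du_\kk}_2^{2}-\cg\nint|u_1u_2|^{\gamma}$, so the whole matter reduces to dominating the interaction term. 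From $|u_1u_2|^{\gamma}\leq\tfrac12(|u_1|^{2\gamma}+|u_2|^{2\gamma})$ it suffices to bound each $\no{u_\kk}_{2\gamma}^{2\gamma}$; since $\gamma>1$ forces $2\gamma>2$ and $\gamma<1+2/\N$ (by \eqref{eq:A1}) forces $2\gamma<2^*$, H\"older interpolation between $L^2$ and $L^{2^*}$ combined with the Sobolev inequality \eqref{eq:sob-1} gives $\no{u_\kk}_{2\gamma}\leq S^{\mu}\no{u_\kk}_2^{1-\mu}\no{Du_\kk}_2^{\mu}$ with $\mu=\N(\gamma-1)/(2\gamma)$. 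Inserting $\no{u_\kk}_2^{2}=\rho_\kk$ yields $\no{u_\kk}_{2\gamma}^{2\gamma}\leq C(\rho_\kk)\no{Du_\kk}_2^{\N(\gamma-1)}$, and the exponent $\N(\gamma-1)$ is strictly less than $2$ precisely by \eqref{eq:A1}, so Young's inequality absorbs $\cg\nint|u_1u_2|^{\gamma}$ into $\tfrac14\sum_\kk\no{Du_\kk}_2^{2}$ plus an additive constant depending only on $\rho,\cg,\gamma,\N$; this is the claimed estimate.

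Granting it, taking the infimum over $N_\rho$ gives $I_\rho\geq -K>-\infty$, while $I_\rho<+\infty$ because $N_\rho\neq\emptyset$ and $J$ is finite on $\HH$; this proves the finiteness in (1) (negativity is treated next). And if $(u_n)\subset N_\rho$ is a minimizing sequence, then $J(u_n)$ is bounded, hence $\tfrac14\sum_\kk\no{Du_{n,\kk}}_2^{2}\leq J(u_n)+K$ is bounded; together with $\no{u_{n,\kk}}_2^{2}=\rho_\kk$ this gives the boundedness of $(u_n)$ in $\HH$, which is (2).

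For $I_\rho<0$ I would choose $\phi_1,\phi_2\in C^{\infty}_c(\RN)$ non-negative with $\no{\phi_\kk}_2^{2}=\rho_\kk$ and with overlapping supports, so that $b:=\nint|\phi_1\phi_2|^{\gamma}>0$, and put $u_\kk^{\sigma}(x):=\sigma^{-\N/2}\phi_\kk(x/\sigma)\in N_\rho$ for $\sigma>0$. A change of variables gives
\[
\sum_\kk\no{Du_\kk^{\sigma}}_2^{2}=\sigma^{-2}\sum_\kk\no{D\phi_\kk}_2^{2},\qquad
\nint|u_1^{\sigma}u_2^{\sigma}|^{\gamma}=\sigma^{\N(1-\gamma)}b,
\]
and, using $|\G(u)|\leq c(|u|^{p}+|u|^{q})$ (which follows from \eqref{eq:A2} and $\G(0)=0$, as in Proposition~\ref{prop:cpt}),
\[
0\leq\nint\G(u^{\sigma})\leq c\,\bigl(\sigma^{\N(1-p/2)}\no{\phi}_p^{p}+\sigma^{\N(1-q/2)}\no{\phi}_q^{q}\bigr).
\]
Since $\gamma>1$, $\gamma<1+2/\N$ and $2\gamma<p\leq q$, each of the exponents $-2,\ \N(1-p/2),\ \N(1-q/2)$ is strictly smaller than $\N(1-\gamma)<0$; hence $\sigma^{-\N(1-\gamma)}J(u^{\sigma})\to-\cg b<0$ as $\sigma\to+\infty$, so $J(u^{\sigma})<0$ for $\sigma$ large and $I_\rho<0$.

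The only delicate point in all of this is the bookkeeping of the scaling exponents in the two displays above. It is exactly here that the subcriticality $\gamma<1+2/\N$ of \eqref{eq:A1} is essential --- it makes the Gagliardo--Nirenberg power of the gradient strictly subquadratic, so that on the fixed-mass set $N_\rho$ the attractive coupling is controlled by the kinetic energy --- and that the lower bound $p>2\gamma$ of \eqref{eq:A2} is essential, since it makes the perturbation $\G$ negligible along the spreading family $u^{\sigma}$.
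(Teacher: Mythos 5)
Your proposal is correct and follows essentially the same route as the paper: coercivity on $N_\rho$ via the pointwise estimate $|u_1u_2|^\gamma\leq\tfrac12(|u_1|^{2\gamma}+|u_2|^{2\gamma})$ combined with Gagliardo--Nirenberg interpolation (so that the gradient power $\N(\gamma-1)$ is subquadratic), from which finiteness of $\f_\rho$ and boundedness of minimizing sequences both follow; and negativity via a mass-preserving dilation $u^\sigma_\kk(x)=\sigma^{-\N/2}\phi_\kk(x/\sigma)$, which is the same test family the paper uses (parametrised there by $R$ and the amplitudes $s_\kk$), exploiting $p>2\gamma$ to suppress $\G$ and $\N(\gamma-1)<2$ to suppress the kinetic term relative to the coupling. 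The only differences are cosmetic: you make the Young-absorption step in the coercivity bound explicit, and you normalise the scaling in the standard $L^2$-invariant form.
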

\begin{proof}
(i).~Let $ v\in H\sp 1 $ be such that $ \no{\D v} = \no{v} = 1 $.
Given $ s_i,R > 0 $, we define
\[
u(x) = (s_1 v(x/R),s_2 v(x/R)).
\]
By a change of variable, it can be easily checked that
\begin{equation*}
\no{u_\kk} \sp 2 = s_\kk \sp 2 R\sp\N,\quad
\no{\D u_\kk} \sp 2 = s_\kk \sp 2 R\sp{\N - 2}.
\end{equation*}
We choose $ R $ and $ s $ such that $ \rho_\kk = s_\kk \sp 2 R \sp \N $.
Thus, $ s_2 = \lambda s_1 $, where $ \lambda\sqrt{\rho_1} := \sqrt{\rho_2} $.
We have
\[
\no{\D u}\sp 2 = \frac{1}{2}
\left(s_1 \sp 2 R\sp{\N - 2} + s_2 \sp 2 R \sp{\N - 2}\right)
= \frac{R\sp{-2} (\rho_1 + \rho_2)}{2}
\]
and
\begin{gather*}
\begin{split}
\nint |u_1 u_2|\sp\gamma =& R\sp n (s_1 s_2)\sp{\gamma}
\no{v}_{2\gamma} \sp{2\gamma} = R\sp n \lambda\sp\gamma s_1 \sp{2\gamma}
\no{v}_{2\gamma} \sp{2\gamma} \\
=& R\sp{n\left(1 - \gamma\right)} (\lambda\rho_1)\sp{\gamma}
\no{v}_{2\gamma} \sp{2\gamma}
\end{split}\\
\begin{split}
\nint \G(u)&\leq \cz\nint |u|\sp p + |u|\sp q = 
\cz R\sp n\left(s_1 \sp p (1 + \lambda\sp 2)\sp{p/2} + s_1 \sp q
(1 + \lambda\sp 2)\sp{q/2}\right)\\
&=\cz R\sp{n\left(1 - \frac{p}{2}\right)} (1 + \lambda\sp 2)\sp{p/2}
+ \cz R\sp{n\left(1 - \frac{q}{2}\right)} (1 + \lambda\sp 2)\sp{q/2}.
\end{split}
\end{gather*}
The constant $ \cz $ above follows from \eqref{eq:A2}. From
the definition of $ \J $ and \eqref{eq:A1} and \eqref{eq:A2}, there exists a 
positive constant $ c > 0 $ such that
\[
\J(u)\leq c \left(R\sp{-2} - R\sp{n(1 - \gamma)} + 
R\sp{n\left(1 - \frac{p}{2}\right)} + R\sp{n\left(1 - \frac{q}{2}\right)}
\right).
\]
For $ R $ large enough the right term of the inequality above is negative. 
Now we prove 
that the infimum of $ \J $ is finite. We fix $ \rho $ as above. 
By the H\"older inequality and \eqref{eq:A3}, we have
\begin{equation}
\label{eq:young}
\begin{split}
J(u)\geq&\frac{1}{2}\sum_{\kk = 1}\sp 2 \no{\D u_\kk} \sp 2 - 
2\cg\left(\no{u_1}_{2\gamma}\no{u_2}_{2\gamma}\right) \sp\gamma\\
\geq&\frac{1}{2}\sum_{\kk = 1}\sp 2 \left(\no{\D u_\kk} \sp 2 - 
2\cg\no{u_\kk}_{2\gamma} \sp{2\gamma}\right)
\end{split}
\end{equation}
From \eqref{eq:sob-1} and the interpolation inequality, we have
\begin{equation}
\label{eq:sob-3}
\begin{split}
\no{u_\kk}_{2\gamma} &\leq
\no{u_\kk} \sp{1 - \frac{\N}{2} + \frac{\N}{2\gamma}} 
\no{u_\kk}_{2\sp*} \sp{\frac{\N}{2} - \frac{\N}{2\gamma}}\\
&\leq S\sp{\frac{\N}{2} - \frac{\N}{2\gamma}}
\no{u_\kk} \sp{1 - \frac{\N}{2} + \frac{\N}{2\gamma}} 
\no{\D u_\kk} \sp{\frac{\N}{2} - \frac{\N}{2\gamma}}\\
&= S\sp{\frac{\N}{2} - \frac{\N}{2\gamma}} \rho_\kk 
\sp{\frac{1}{2}(1 - \frac{\N}{2} + \frac{\N}{2\gamma})} 
\no{\D u_\kk} \sp{\frac{\N}{2} - \frac{\N}{2\gamma}}.
\end{split}
\end{equation}
Then, there exists a constant $ c = c(\beta,\rho_\kk,S) > 0 $ such that
\begin{equation}
\label{eq:coercive}
\begin{split}
J(u)&\geq c\sum_{\kk = 1}\sp 2 \no{\D u_\kk}\sp 2 - 
\no{\D u_\kk} \sp{\frac{\N}{2} - \frac{\N}{2\gamma}}\\
&=c\sum_{\kk = 1} \sp 2 X_\kk \sp 2 - X_\kk 
\sp{\frac{\N}{2} - \frac{\N}{2\gamma}} 
=: g(X)
\end{split}
\end{equation}
where in the last equality $ \no{\D u_\kk}_2 $ has been replaced by $ X_\kk $.
By the hypotheses on $ \gamma $ in \eqref{eq:A1}, $ g $ is bounded from 
below. Thus, $ \f_\rho $ is well-defined and
negative.\vskip .2em
\noindent(ii).~Let $ (u_n) $ be a minimizing sequence on $ N_\rho $.
By definition, $ \no{u_n \sp \kk}_2 \sp 2 = \rho_\kk $ thus constant and 
bounded. Moreover, if a subsequence of $ \no{\D u_n \sp\kk}_2 $ diverges, for
some $ \kk=1,2 $, then the right end of the first line in \eqref{eq:coercive}
will diverge positively, leading to a contradiction with
$ \f_\rho < 0 $.
\end{proof}
\section{Solutions on bounded domains}
\label{sect:three}
Given $ R > 0 $, we denote with $ B_R $ the ball centered at the origin 
with radius $ R $. We define the functional
\begin{equation}
\label{eq:jbdd}
\J_R \colon H_0 \sp 1 (B_R)\oplus H_0 \sp 1 (B_R)\ra\R
\end{equation}
as the restriction of $ \J $. We look at the minimizers of $ J_R $
over the constraint
\[
N_{\rho} (B_R) := \{u\in H_0 \sp 1 (B_R)\oplus H_0 \sp 1 (B_R)\,|\,
\no{u_\kk}_{L\sp 2 (B_R)} \sp 2 = \rho_\kk\}.
\]
The assumptions on $ \F $ are those stated in the previous section (even
if some of them could be relaxed).
\begin{prop}
\label{prop:bounded}
The functional $ \J_R $ attains its infimum on $ N_{\rho} (B_R) $. If 
$ \J_R (u) = \inf_{N_\rho (B_R)} \J_R $, then $ u_\kk > 0 $ or $ u_\kk < 0 $
on $ B_R $. Moreover, a minimum of $ J_R $ can be chosen to be positive,
radially symmetric and decreasing, and of class 
$ C\sp{1,\alpha} (\overline{B}) $.
\end{prop}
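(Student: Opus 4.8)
The plan is to run the direct method on the bounded domain to get a minimizer, then to symmetrize it into a nonnegative, radial and radially decreasing minimizer, and finally to extract the Euler--Lagrange equation and apply elliptic regularity together with a strong maximum principle. \textbf{Existence.} First I would take a minimizing sequence $(u_n)$ for $\J_R$ on $N_\rho(B_R)$. The coercivity estimate \eqref{eq:coercive} from the proof of Lemma~\ref{lem:general} is valid verbatim on $H_0^1(B_R)\oplus H_0^1(B_R)$, because \eqref{eq:sob-1} holds on $H_0^1(B_R)$; hence $(u_n)$ is bounded there. Passing to a subsequence, $u_n\wk u$ weakly and, by the Rellich--Kondrachov theorem, $u_n\ra u$ strongly in $L^p(B_R)\oplus L^p(B_R)$ for every $p<2^{*}$, so the constraint passes to the limit and $u\in N_\rho(B_R)$. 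The Dirichlet part of $\J_R$ is weakly lower semicontinuous, and since $|\F(v)|\leq C(|v|^{p}+|v|^{q}+|v|^{2\gm})$ with $p,q,2\gm<2^{*}$, a dominated-convergence argument along a further subsequence gives $\int_{B_R}\F(u_n)\ra\int_{B_R}\F(u)$. Therefore $\J_R(u)\leq\liminf_n\J_R(u_n)=\inf_{N_\rho(B_R)}\J_R$, i.e.\ $u$ is a minimizer.

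\textbf{A nonnegative, radial, decreasing minimizer.} Since $|\D|u_\kk||=|\D u_\kk|$ a.e., the vector $v:=(|u_1|,|u_2|)$ still lies in $N_\rho(B_R)$, and by \eqref{eq:A1} and \eqref{eq:A3} one has $\F(|u_1|,|u_2|)=\F(u)$ pointwise, so $v$ is again a minimizer. Next I would replace each $v_\kk$ by its symmetric decreasing rearrangement $v_\kk^{*}$, whose support still sits inside $\overline{B_R}$, so $v^{*}\in N_\rho(B_R)$. The P\'olya--Szeg\H{o} inequality gives $\no{\D v_\kk^{*}}\leq\no{\D v_\kk}$, the Hardy--Littlewood inequality gives $\int_{B_R}(v_1^{*}v_2^{*})^{\gm}\geq\int_{B_R}(v_1v_2)^{\gm}$, and \eqref{eq:A4} gives $\int_{B_R}\G(v_1^{*},v_2^{*})\leq\int_{B_R}\G(v_1,v_2)$; adding these yields $\J_R(v^{*})\leq\J_R(v)$, so $v^{*}$ is a nonnegative, radially symmetric and radially decreasing minimizer.

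\textbf{Euler--Lagrange equation and regularity.} Any minimizer $w$ of $\J_R$ on $N_\rho(B_R)$ satisfies, by the Lagrange multiplier rule (the functional $\J_R$ is of class $C^1$ by the argument of Proposition~\ref{prop:smooth}(a), which carries over to $H_0^1(B_R)$, and the constraint maps $u\mapsto\no{u_\kk}^{2}$ have non-vanishing, independent differentials on $N_\rho(B_R)$), the system
\[
-\Delta w_\kk+\D_\kk\F(w)=\mu_\kk w_\kk\quad\text{in }B_R,\qquad w_\kk\in H_0^1(B_R),\ \ \kk=1,2,
\]
for suitable $\mu_\kk\in\R$. Because $q<2^{*}$, a standard elliptic bootstrap ($L^{r}$-estimates for $-\Delta$ combined with Sobolev embeddings, and smoothness of $\partial B_R$) upgrades each $w_\kk$ to $W^{2,r}(B_R)$ for every $r<\infty$, hence $w\in C^{1,\alpha}(\overline{B_R})$.

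\textbf{Strict positivity --- the main difficulty.} It remains to show that a nonnegative minimizer $w$ has $w_\kk>0$ everywhere in $B_R$. Rewriting the $\kk$-th equation with the help of \eqref{eq:A1} as
\[
-\Delta w_\kk-\mu_\kk w_\kk+\D_\kk\G(w)=\cg\gm\,w_\kk^{\gm-1}w_{3-\kk}^{\gm}\geq0,
\]
the right-hand side has the favorable sign, and since $2\gm<p$ in \eqref{eq:A2} the term $\D_\kk\G(w)$ is of lower order; $w_\kk$ is then a nonnegative supersolution of a second-order elliptic operator to which the strong maximum principle applies, and since $\no{w_\kk}=\sqrt{\rho_\kk}>0$ it cannot vanish on the connected set $B_R$, so $w_\kk>0$ there. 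Applied to $v^{*}$ this produces the announced positive, radial, decreasing, $C^{1,\alpha}(\overline{B_R})$ minimizer (its components are positive on all of $B_R$ since those of $v$ are). Finally, for an arbitrary minimizer $u$: it solves the Euler--Lagrange system, hence is continuous on $\overline{B_R}$, whereas $(|u_1|,|u_2|)$ is a minimizer with strictly positive components by the previous step; therefore each $u_\kk$ is continuous and nowhere zero on the connected set $B_R$, and so has constant sign. I expect the maximum-principle step to be the main obstacle: one has to put the terms $\mu_\kk w_\kk$ and $\D_\kk\G(w)$ into a form (locally bounded coefficients, or coefficients in $L^{s}$ with $s>n/2$, together with the good sign of the coupling term) to which a strong maximum principle or the Harnack inequality applies, and it is precisely here that the restriction $2\gm<p$ in \eqref{eq:A2} and the sign of the coupling term in \eqref{eq:A1} are used, exactly as in the scalar estimates of \cite{BBGM07}.
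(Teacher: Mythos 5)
Your proposal runs essentially the same proof as the paper: the direct method on the ball, pass to $(|u_1|,|u_2|)$ and then to its symmetric decreasing rearrangement using P\'olya--Szeg\H{o}, Hardy--Littlewood and \eqref{eq:A4}, write the Euler--Lagrange system, apply elliptic regularity, and use the strong maximum principle for strict positivity. Two remarks on the places where you diverge or are less explicit. For the existence step the paper makes the Lagrangian nonnegative by adding $\lambda(|z|^{2}+|z|^{r})$ and uses its convexity in the gradient variable via a theorem of Struwe, whereas you use Rellich--Kondrachov plus dominated convergence directly on $\int_{B_R}\F(u_n)$; on the bounded domain your route is equally valid and a bit more elementary. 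On the positivity step, the condition you highlight---that $2\gamma<p$ in \eqref{eq:A2} makes $\D_\kk\G(w)$ ``of lower order''---is not what the strong maximum principle actually needs. One must show that the zero-order coefficient one divides out, namely $\lambda_\kk-\D_\kk\G(v)/v_\kk$, has locally bounded negative part near the zeros of $v_\kk$; the growth bound $|\D\G(u)|\leq c(|u|^{p-1}+|u|^{q-1})$ by itself gives no control over $\D_\kk\G(v)/v_\kk$ when $v_\kk\to 0$ while the other component stays bounded away from zero. The paper writes this coefficient $A_\kk(x)$ down explicitly and then applies \cite[Theorem~3.5]{GT98}; the structural input one really leans on is \eqref{eq:A3}, which together with $\G\in C^{1}$ forces $\D_\kk\G$ to vanish on $\{u_\kk=0\}$. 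You correctly single out the maximum-principle step as the delicate one, but the hypothesis you point to is not the one that makes it work.
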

\begin{proof}
In order to simplify the notation, we denote $ \J_R $ with $ \J $, $ B_R $ 
with $ B $ and $ N_\rho (B_R) $ with $ N $. Let $ L $ be the Lagrangian
associated to $ \J $
\begin{gather*}
L\colon B\times\R\sp 2\times\R\sp{2\N}\ra\R\\
(x,z,p)\mapsto\frac{1}{2} |p|\sp 2 -\beta|z_1 z_2|\sp\gamma + \G(z)
\end{gather*}
First, we observe that $ \J $ is weakly lower semi-continuous. Let 
$ \lambda\in\R $ and $ 2\sp * > r > 2\gamma $ be such that 
\[
\begin{split}
L_{\lambda} (x,z,p) := L(x,z,p) + \lambda (|z|\sp 2 + |z|\sp r)\geq 0
\end{split}
\]
Such $ \lambda $ and $ r $ exist from the hypothesis on $ \gamma $
in \eqref{eq:A1} and \eqref{eq:A2}. Because $ L $ is convex in $ p $,
also $ L_{\lambda} $ is convex in $ p $. We denote with $ \J_{\lambda} $
the functional associated to $ L_{\lambda} $. Thus, $ \J_{\lambda} $
is weakly lower semi-continuous by \cite[Theorem~1.6,p.\,9]{Str08}.
Given a weakly converging sequence $ u_n \wk u $ in $ \HH $, there exists
a subsequence $ u_{n_k} $ such that
\begin{gather*}
u_{n_k}\ra u\in L\sp 2 (B_R)\cap L\sp r (B_R)\\
\liminf_{n\ra\infty} \J(u_n) = \lim_{k\ra\infty} \J(u_{n_k})
\end{gather*}
from the Rellich-Kondrakov theorem, 
\cite[Th\'eor\`eme~IX.16,p.\,169]{Bre83}. We have
\[
\begin{split}
\liminf_{n\ra\infty}\J(u_n) =& \lim_{k\ra\infty} \J(u_{n_k})\\
=& -\lambda\left(\no{u_{n_k}}\sp 2 + \no{u_{n_k}}_r \sp r\right) + 
\lim_{k\ra\infty} \J_{\lambda} (u_{n_k})\\
= &
-\lambda\left(\no{u}\sp 2 + \no{u}_r \sp r\right) + 
\liminf_{k\ra\infty} \J_{\lambda} (u_{n_k})\\
\geq & -\lambda\left(\no{u}\sp 2 + \no{u}_r \sp r\right) 
+ \J_{\lambda} (u)
= \J(u).
\end{split}
\]
Let $ u_n $ be a minimizing sequence for $ J $ over $ N $. By (ii) of
Lemma~\ref{lem:general} such sequence is bounded. Thus, by the
Rellich-Kondrakov theorem, we can suppose that 
$ u_n \ra u $ in $ L\sp 2 (B) $ and weakly
in $ H\sp 1 _0 (B) $. Then $ u\in N $.
Because $ J $ is weakly lower semi-continuous, we have
\[
J(u)\leq\liminf_{n\ra +\infty} J(u_n) = \inf_N J.
\]
Hence $ u $ is a minimizer of $ J $. We argue by contradiction and
suppose that $ u $ vanishes at some point. Let
\[
v_\kk := |u_\kk|\geq 0.
\]
It is easy to check that $ v\in N $ and $ J(u) = J(v) $. Then $ v $
is a weak solution of the 
\begin{equation}
\label{eq:prop:bounded-1}
-\Delta v_\kk = \lambda_\kk v_\kk
+ \cg\gamma v_\kk\sp{\gamma - 1} v_{\sigma(\kk)}\sp\gamma - \D_\kk\G(v)
\end{equation}
for some $ \lambda_\kk\in\R $ and where 
$ \sigma(1) = 2 $ and $ \sigma(2) = 1 $.
By local regularity theory $ v_\kk\in H\sp 2 _{loc} (B)\cap C(\overline{B}) $
and $ v_\kk $ vanishes at some point. We have
\[
- \Delta v_\kk - \lambda_\kk v_\kk + D_\kk \G (v)\geq 0.
\]
By (\ref{eq:A1},\ref{eq:A2}), we have a
well defined function
\[
A_\kk (x) =  
\begin{cases}
\lambda_\kk - \D_\kk \G (v) v_\kk \sp{-1} & \text{ if } v_\kk (x)\neq 0\\
\lambda_\kk &\text{ otherwise.}
\end{cases}
\]
Therefore
\[
\Delta v_\kk + A_\kk \sp - (x) v_\kk\leq 0.
\]
Because $ v_\kk $ is bounded on $ B $ and $ \D_\kk\G $ is continuous, we have 
$ A_\kk\sp-\in L\sp\infty (B) $. Thus we can apply the
strong maximum principle: if $ v_\kk $ vanishes in the 
interior of $ B $, by \cite[Theorem~3.5]{GT98}, $ v_\kk\equiv 0 $.
Because this is not possible by the constraint condition, we obtain 
$ v_\kk > 0 $, a contradiction.\vskip .2em
\noindent Now, given a positive minimizer $ u $, we can take the decreasing
rearrangement $ u\sp * $. By \cite[Eq.~(4),p.\,81]{LL01}
\[
u\sp *\in N_\rho (B).
\]
Moreover, by \cite[Lemma~7.17,p.\,188]{LL01},
\cite[Lemma~2.1]{Kaw85} and assumption~\eqref{eq:A4}, it follows that 
\[
J(u\sp *)\leq J(u).
\]
In fact, due to the minimization property of $ u $, the inequality is
an equality, hence $ u\sp * $ is a minimizer.
Because of the radial symmetry, we have 
$ u\sp *\in C\sp{1,\alpha} (\overline{B}) $ for some $ \alpha\in (0,1) $.
\end{proof}
\section{The sub-additivity property of $ \f $}
\label{sect:subadd}
The next Lemma is the one-dimensional case of \cite[Proposition~1.4]{Bye00}.
We include the proof because, due to the specifity of the case, we can
state a more precise inequality. We use the notation $ u\sp * $ for
the symmetric decreasing rearrangement.
\begin{lemma}
\label{lem:steiner}
Let $ u,v\in H\sp 1 (\R) $ be two compactly supported, symmetric functions
with respect to the origin such that
\[
\supp(u) = [-c,c],\quad\supp(v) = [-d,d]
\]
and $ \sup(u)\leq\sup(v) $. Moreover, $ u $ and $ v $ are differentiable
except on the boundary and
\begin{equation}
\label{eq:decreasing}
t u'(t),t v'(t) < 0
\end{equation}
on the complementary of a finite subset.
Let $ T $ be such that
\[
\supp(u)\cap\supp(v(\cdot - T)) = \emptyset.
\]
We define $ w(t) := u(t) + v(t - T) $. Then 
\begin{equation}
\label{eq:lem:steiner}
\no{{w\sp *}'}\sp 2 \leq\no{w'}\sp 2 - \frac{3}{4}\no{u'}\sp 2.
\end{equation}
\end{lemma}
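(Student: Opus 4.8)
The plan is to reduce \eqref{eq:lem:steiner} to a single pointwise inequality between the distribution functions of $u$, $v$ and $w$. First I would record the shape of the three functions. Since $u,v\in H^{1}(\R)$ are continuous and vanish outside $[-c,c]$ and $[-d,d]$, \eqref{eq:decreasing} forces each of them to increase from $0$ to its maximum and then decrease back to $0$; in particular $u,v\ge 0$, and we may write $u(t)=U(|t|)$, $v(t)=V(|t|)$ with $U,V$ absolutely continuous and strictly decreasing, $a:=\sup u=U(0)\le\sup v=V(0)=:b$, $U(c)=V(d)=0$. It is here that the assumption $tu',tv'<0$ \emph{off a finite set} — rather than merely off a null set — matters: it makes $U',V'$ vanish on a null set only, hence $U,V$ carry positive-measure sets to positive-measure sets, hence $U^{-1},V^{-1}$ are again absolutely continuous, which is what legitimises the change of variables below. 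Because $\supp(u)$ and $\supp(v(\cdot-T))$ are disjoint, $w=u+v(\cdot-T)\ge 0$ and, for $s>0$,
\[
\mu_w(s):=|\{w>s\}|=|\{u>s\}|+|\{v(\cdot-T)>s\}|=\mu_u(s)+\mu_v(s),
\]
with $\mu_u\equiv 0$ on $[a,\infty)$ and $\mu_v\equiv 0$ on $[b,\infty)$; all three are continuous and strictly decreasing where positive, so $u^{*},v^{*},w^{*}$ are genuine symmetric-decreasing bumps with no flat part above level $0$.

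Next I would convert each Dirichlet integral into an integral over the level variable. By the one-dimensional P\'olya--Szeg\H{o} theory (the co-area formula and its equality case, see \cite{Kaw85,LL01}), a symmetric strictly-decreasing bump $f$ satisfies $\no{f'}^{2}=\int_{0}^{\sup f}4\,(-\mu_f'(s))^{-1}\,ds$. Applying this to $u$, to $v$, and to $w^{*}$ (whose distribution function is $\mu_w$), and using disjointness of the supports for $w$ itself,
\[
\no{u'}^{2}=\int_{0}^{a}\frac{4\,ds}{-\mu_u'(s)},\qquad \no{v'}^{2}=\int_{0}^{b}\frac{4\,ds}{-\mu_v'(s)},\qquad \no{w'}^{2}=\no{u'}^{2}+\no{v'}^{2},
\]
\[
\no{(w^{*})'}^{2}=\int_{0}^{b}\frac{4\,ds}{-\mu_w'(s)}=\int_{0}^{b}\frac{4\,ds}{-\mu_u'(s)-\mu_v'(s)} .
\]
Subtracting, and noticing that $\mu_u'\equiv 0$ on $(a,b)$ makes the integrand vanish there, one is left with
\[
\no{w'}^{2}-\no{(w^{*})'}^{2}=\int_{0}^{a}\left(\frac{4}{-\mu_u'(s)}+\frac{4}{-\mu_v'(s)}-\frac{4}{-\mu_u'(s)-\mu_v'(s)}\right)ds .
\]

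The argument then closes at once: writing $p:=-\mu_u'(s)>0$ and $q:=-\mu_v'(s)>0$ one has $\tfrac{4}{q}-\tfrac{4}{p+q}=\tfrac{4p}{q(p+q)}\ge 0$, so the integrand is $\ge 4/p$ and therefore
\[
\no{w'}^{2}-\no{(w^{*})'}^{2}\ \ge\ \int_{0}^{a}\frac{4\,ds}{-\mu_u'(s)}=\no{u'}^{2}\ \ge\ \tfrac34\,\no{u'}^{2},
\]
which is \eqref{eq:lem:steiner} — in fact with the better constant $1$ in place of $3/4$. I expect the main (indeed essentially the only) obstacle to be the passage in the middle paragraph: one must be sure that the rearrangements of these $H^{1}$ functions lie again in $H^{1}$ and that the substitution ``value $\leftrightarrow$ position'' is valid. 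That is precisely where the absolute continuity of $U,V,U^{-1},V^{-1}$ — a consequence of the finite exceptional set in \eqref{eq:decreasing} — enters; once it is granted, the co-area formula applies verbatim and the rest is bookkeeping plus the two-line estimate above. A slightly longer but equivalent route is to split $w$ at the level $a=\sup u$: the part above $a$ equals $(v(\cdot-T)-a)_{+}$, a single translated bump whose rearrangement has the same Dirichlet energy, so the whole deficit comes from the ``equal-height'' function $u+\min(v(\cdot-T),a)$, to which the same distribution-function computation applies.
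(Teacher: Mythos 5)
Your proof is correct and, modulo notation, follows the same route as the paper: both change to the level variable $s$ and express the Dirichlet energies of $u$, $v$ and $w^{*}$ through the level-set change of variables. You phrase it via the distribution function $\mu_f$, the paper via the inverses $y_u,y_v$ and the half-width $z$ of $\{w^{*}>s\}$, but these are the same object since $\mu_u=2y_u$, $\mu_v=2y_v$ and $\mu_w'=\mu_u'+\mu_v'$, so the identity $\|f'\|^{2}=\int_{0}^{\sup f}4\,ds/(-\mu_f'(s))=-2\int_{0}^{\sup f}(y_f'(s))^{-1}ds$ is exactly \eqref{eq:lem:steiner-5}. The only genuine difference is the closing algebra. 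The paper bounds $-2(y_u'+y_v')^{-1}$ above in two steps, first by $2(x+y)^{-1}\leq x^{-1}+y^{-1}-\max\{x^{-1},y^{-1}\}$ (applied to negative arguments) and then by $2\max\{t,s\}\geq t+s$, landing on $\|(w^{*})'\|^{2}\leq\frac14\|u'\|^{2}+\|v'\|^{2}$ and hence the constant $3/4$. You instead observe directly that, with $p=-\mu_u'(s)>0$ and $q=-\mu_v'(s)>0$, the deficit integrand $\frac{4}{p}+\frac{4}{q}-\frac{4}{p+q}$ is $\geq\frac{4}{p}$ simply because $\frac{4}{q}\geq\frac{4}{p+q}$; integrating over $(0,a)$ gives $\|w'\|^{2}-\|(w^{*})'\|^{2}\geq\|u'\|^{2}$, i.e.\ the sharper constant $1$ in place of $3/4$. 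Since Proposition~\ref{lem:subadd} only uses that the deficit is strictly positive, either constant serves, but your one-line estimate is cleaner and proves a strictly stronger inequality.
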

\begin{proof}
We set $ a := \sup(u) $ and $ b := \sup(v) $. The functions
\[
u\colon (0,c)\ra (0,a),\quad v\colon (0,d)\ra (0,b)
\]
are invertible, because they are strictly decreasing, by \eqref{eq:decreasing}.
Let $ y_u $ and $ y_v $ be these inverses. Thus,
\begin{equation}
\label{eq:lem:steiner-1}
u(y_u (s)) = s\text{ on } (0,a),\quad v(y_v (s)) = s\text{ on } (0,b)
\end{equation}
Because $ w\sp * $ is symmetric and decreasing, the level set
$ \{w\sp * > s\} $ is an interval. We define its width by $ 2z(s) $.
We have
\begin{equation}
\label{eq:lem:steiner-2}
2z(s) = |\{w\sp * > s\}| = 
\begin{cases}
2y_u (s) + 2y_v(s) &\text{ if } s\in (0,a)\\
2 y_v (s) &\text{ if } s\in (a,b).
\end{cases}
\end{equation}
The second equality follows from the definition of decreasing rearrangement.
Because $ y_u $ and $ y_v $ are strictly decreasing functions and
differentiable everywhere, so is $ z $. Moreover
\begin{equation}
\label{eq:lem:steiner-3}
w\sp* (z(s)) = s\text{ on } (0,b).
\end{equation}
Taking the derivative with respect to $ s $ in \eqref{eq:lem:steiner-3}
and in \eqref{eq:lem:steiner-1}, we have
\begin{equation}
\label{eq:lem:steiner-4}
{w\sp *}' (z(s)) z' (s) = 1,\quad u' (y_u (s)) y_u ' (s) = 1,
\quad v' (y_v (s)) y_v ' (s) = 1.
\end{equation}
Hence
\begin{equation}
\label{eq:lem:steiner-5}
\begin{split}
\int_\R |{w\sp *}'|\sp 2 dt &= 2\int_0 \sp{c + d} |{w\sp *}'|\sp 2 \\
&=-2\int_0 \sp b |{w\sp *}' (z(s))|\sp 2 z'(s) ds
= -2\int_0 \sp b (z'(s))\sp{-1} ds \\
&= - 2 \int_0 \sp a (y' _u (s) + y' _v (s))\sp{-1} 
- 2\int_a \sp b (y' _v (s))\sp{-1} ds.
\end{split}
\end{equation}
The second equality follows from a change of variable and 
\eqref{eq:lem:steiner-4}. The fourth equality follows from 
\eqref{eq:lem:steiner-2}. We use the inequality
\[
2(x + y)\sp{-1}\leq x\sp{-1} + y\sp{-1} - \max\{x\sp{-1},y\sp{-1}\}.
\]
Thus, the last term of \eqref{eq:lem:steiner-5} is bounded from above by
\[
\begin{split}
& -\int_0 \sp a (y' _u (s))\sp{-1} + (y' _v (s))\sp{-1}ds \\
&+ \int_0 \sp a \max\{y' _u (s)\sp{-1},y' _v (s)\sp{-1}\} ds
- 2\int_a \sp b (y' _v (s))\sp{-1} ds
\end{split}
\]
using the estimate $ 2\max\{t,s\}\geq t + s $, the last term is bounded by
\[
\begin{split}
&-\frac{1}{2}\int_0 \sp a (y'_u (s))\sp{-1} ds
-\frac{1}{2}\int_0 \sp a (y' _v (s))\sp{-1} ds
-2\int_a \sp b (y'_v (s))\sp{-1} ds\\
\leq& -\frac{1}{2}\int_0 \sp a (y'_u (s))\sp{-1} ds
-2\int_0 \sp b (y'_v (s))\sp{-1} ds \\
=& \frac{1}{4} \cdot\left( - 2 \int_0 \sp a (y'_u (s))\sp{-1}\right)
+ \left(-2\int_0 \sp b (y'_v (s))\sp{-1}\right) ds.
\end{split}
\]
From a change of variable and \eqref{eq:lem:steiner-4} it follows
\[
\no{u'}\sp 2 = - 2 \int_0 \sp a (y'_u (s))\sp{-1} ds,
\quad
\no{v'}\sp 2 = -2\int_0 \sp b (y'_v (s))\sp{-1} ds.
\]
Thus, from \eqref{eq:lem:steiner-5}, we obtain
\[
\no{{w\sp *}'}\sp 2 \leq \frac{1}{4}\no{u'} \sp 2 + \no{v'}\sp 2 =
\no{w'}\sp 2 - \frac{3}{4}\no{u'}\sp 2.
\]
\end{proof}
\begin{prop}
\label{lem:subadd}
Let $ \rho,\tau $ be such that $ \rho_i\geq\tau_i > 0 $ and $ \tau\neq\rho $.
Then,
\[
\f_\rho < \f_\tau + \f_{\rho - \tau}
\]
that is, $ f $ is sub-additive.
\end{prop}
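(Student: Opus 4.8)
The plan is to produce, for each $\var>0$, a competitor in $N_\rho$ whose $\J$-value is at most $\f_\tau+\f_{\rho-\tau}+2\var-D$, with $D>0$ not degenerating as $\var\ra 0$; letting $\var\ra 0$ then yields the statement (indeed the sharper $\f_\rho\le\f_\tau+\f_{\rho-\tau}-D$ announced in the introduction). Assume first that every component of $\tau$ and of $\rho-\tau$ is positive, so $N_\tau,N_{\rho-\tau}$ are genuine constraints and $\f_\tau,\f_{\rho-\tau}<0$ by Lemma~\ref{lem:general}(i). Using Proposition~\ref{prop:bounded} on a large ball, I would pick $u\in N_\tau$, $v\in N_{\rho-\tau}$ non-negative, radially symmetric, strictly decreasing, compactly supported, of class $C\sp{1,\alpha}$, with $\J(u)\le\f_\tau+\var$ and $\J(v)\le\f_{\rho-\tau}+\var$. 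Translating $v$ far off, so that $\supt(u)$ and $\supt(v(\cdot+y))$ are disjoint for $|y|$ large, set $w:=u+v(\cdot+y)$. Then $w_\kk\ge 0$ and $\no{w_\kk}\sp 2=\tau_\kk+(\rho_\kk-\tau_\kk)=\rho_\kk$, so $w\in N_\rho$; and since $\F$ vanishes at the origin and the supports are disjoint, $\J(w)=\J(u)+\J(v)\le\f_\tau+\f_{\rho-\tau}+2\var$.

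The next step is a single Steiner symmetrization. Apply, componentwise, the Steiner symmetrization $w\mapsto w\sp *$ in a fixed direction $e_1$; it preserves each $L\sp 2$ norm, so $w\sp *\in N_\rho$. For the potential term \eqref{eq:A4} gives $\nint\G(w\sp *)\le\nint\G(w)$, while for the coupling term the rearrangement inequality $\nint fg\le\nint f\sp *g\sp *$ for $f,g\ge 0$ — applied with $f=w_1\sp\gm$, $g=w_2\sp\gm$, using $(w_\kk\sp\gm)\sp *=(w_\kk\sp *)\sp\gm$ — gives $\nint|w_1w_2|\sp\gm\le\nint|w_1\sp *w_2\sp *|\sp\gm$. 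By the minus sign in \eqref{eq:A1} both inequalities push $\nint\F$ downward, so $\nint\F(w\sp *)\le\nint\F(w)$.

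The heart of the argument is a strict drop in the gradient. Write $\no{\D w_\kk\sp *}\sp 2=\no{\p_1 w_\kk\sp *}\sp 2+\sum_{\ii\ge 2}\no{\p_\ii w_\kk\sp *}\sp 2$; Steiner symmetrization in $e_1$ does not increase the transverse part. For the first term, Fubini reduces matters to $\R$: for a.e. $x'\in\R\sp{\N-1}$ the slice $t\mapsto w_\kk(t,x')$ is the sum of two compactly supported, symmetric, strictly decreasing bumps with disjoint supports, so Lemma~\ref{lem:steiner} applies once they are ordered by height; integrating its conclusion in $x'$ yields
\[
\no{\p_1 w_\kk\sp *}\sp 2\le\no{\p_1 w_\kk}\sp 2-\tfrac34 D_\kk,\quad
D_\kk:=\int_{\R\sp{\N-1}}\min\{\no{\p_1 u_\kk(\cdot,x')}_{L\sp 2(\R)}\sp 2,\,\no{\p_1 v_\kk(\cdot,x')}_{L\sp 2(\R)}\sp 2\}\,dx'.
\]
As $u_\kk,v_\kk\not\equiv 0$ are radial and strictly decreasing, $D_\kk>0$; hence
\[
\f_\rho\le\J(w\sp *)\le\J(w)-D\le\f_\tau+\f_{\rho-\tau}+2\var-D,\qquad D:=\tfrac38(D_1+D_2)>0.
\]

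The delicate point, which I expect to be the main obstacle, is that $D$ must stay bounded below independently of $\var$. A minimizing sequence for $\J$ on $N_\rho$ is bounded in $H\sp 1$ by Lemma~\ref{lem:general}(ii), and its gradient cannot vanish: if $\no{\D u_\kk}\ra 0$ then by \eqref{eq:sob-3} the coupling term goes to $0$ and $\J(u)$ has a non-negative limit, contradicting $\f_\rho<0$. So the competitors $u,v$ can be chosen with $\no{\D u_\kk},\no{\D v_\kk}$ in a fixed compact subinterval of $(0,\infty)$; for $\N\ge 3$ a radially decreasing function with Dirichlet energy in such an interval carries a bump of definite width (a shrinking spike or a spreading profile would have vanishing Dirichlet energy), which bounds $D_1,D_2$, hence $D$, below in terms of $\rho,\tau$ only. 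Then $\var$ small gives $\f_\rho<\f_\tau+\f_{\rho-\tau}$. Finally, the boundary case $\rho_\kk=\tau_\kk$ for some $\kk$ — where $\f_{\rho-\tau}=0$ is attained only by spreading and the construction produces no gain — must be handled separately: split $\rho=(\tau_1,\tau_2-\eta)+(\rho_1-\tau_1,\eta)$ with $\eta>0$ small, invoke the sub-additivity just proved in the non-degenerate case, and use the scaling estimate behind Lemma~\ref{lem:general}(i) to see that $\f_{(\rho_1-\tau_1,\eta)}\ra 0\sp-$ as $\eta\ra 0$ faster than the loss $\f_{(\tau_1,\tau_2-\eta)}-\f_\tau$, so the sum falls below $\f_\tau=\f_\tau+\f_{\rho-\tau}$ for $\eta$ small.
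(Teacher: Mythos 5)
Your proposal follows the same roadmap as the paper's proof: build a two-bump competitor $w = u + v(\cdot + y)$ in $N_\rho$ with disjoint supports so that $\J(w)=\J(u)+\J(v)$, Steiner-symmetrize in one direction, use \eqref{eq:A4} and the Riesz rearrangement inequality to control the potential term, and extract a strict gradient drop by slicing and invoking Lemma~\ref{lem:steiner}. Your lower bound $D_\kk$ as the integral of the slicewise minimum is a valid (slightly weaker) reformulation of what the lemma yields, since the smaller-supremum slice always has energy at least the minimum of the two slice energies. Your observation that the degenerate case $\rho_\kk=\tau_\kk$ for some $\kk$ is not covered by the main construction is correct and worth recording; the paper's proof sets $\sigma:=\rho-\tau$ and tacitly assumes $\sigma_\kk>0$, so this boundary case is indeed glossed over there as well.

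The genuine gap is exactly where you flag "the delicate point": you assert that because $\no{\D u_\kk}$ and $\no{\D v_\kk}$ can be kept in a fixed compact subinterval of $(0,\infty)$, the quantity $D_\kk$ is bounded below uniformly as $\var\to0$, because "a radially decreasing function with Dirichlet energy in such an interval carries a bump of definite width." This is not a proof, and as stated it is false: the quantity $D_\kk$ couples the two profiles slicewise, and two radially decreasing functions whose Dirichlet energies sit in the same interval can still have incompatible concentration scales (a narrow high spike against a wide low plateau, both consistent with the $L^2$ and Dirichlet constraints), making the slicewise minimum small on most of $\R\sp{\N-1}$. The paper resolves this by working with the full minimizing sequences and invoking the compact embedding of radial $H\sp1$ into $L\sp{2\gamma}$ (\cite[Theorem~A.I\,$'$]{BL83}): it extracts $L\sp{2\gamma}$-strong limits $u_\kk,v_\kk\not\equiv0$, passes to pointwise a.e.\ convergence, gets a uniform pointwise lower bound $u_n\sp\kk,v_n\sp\kk\geq h/2$ on a fixed ball $B_R$, and then uses the partition of $\R\sp{\N-1}$ into $U_n\sp\kk$ and its complement (at least half of $B_R$ lies in one of them) plus Sobolev to bound the relevant Dirichlet energy from below. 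Without that compactness step — or some substitute for it — your argument does not rule out the gain $D$ degenerating along the minimizing family. A secondary, more technical omission: Lemma~\ref{lem:steiner} requires the slice derivatives to vanish only on a finite set; the paper enforces this via a Stone--Weierstrass polynomial approximation of the radial profiles, and "strictly decreasing" alone does not give it.
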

\begin{proof}
Let us define $ \sigma := \rho - \tau $ and let
\begin{gather}
u_n\in N_\tau,\quad v_n\in N_\sigma
\end{gather}
be minimizing sequences of $ \J $ over $ N_\tau $ and $ N_\sigma $, 
respectively. Because $ \J $ is continuous by (a) of 
Proposition~\ref{prop:smooth}, we can suppose that all the 
functions above have compact support. Thus, there exists a ball 
$ B_n\subset\R\sp\N $ such that
\[
\supp(u_n)\cup\supp(v_n)\subseteq B_n.
\]
By Proposition~\ref{prop:bounded}, we can replace $ u_n $ and
$ v_n $ with the corresponding minimizers of $ \J $ over 
$ N_{\tau} (B_n) $ and $ N_{\sigma} (B_n) $. By
the same proposition, these can be chosen to be positive in the interior of 
$ B_n $, radially symmetric and $ C\sp 1 (\overline{B_n}) $. 
Using the Stone-Weierstrass theorem, we can suppose that
\begin{gather*}
u_n \sp\kk (x) := p_n \sp\kk (|x|)
\end{gather*}
where $ p_n $ is a polynomial. Hence,
\begin{equation}
\label{eq:nice}
u_n \sp\kk (x',\cdot)' (t) = 
\frac{t {p_n \sp\kk}' (|(t,x')|)}{\sqrt{t\sp 2 + |x'|\sp 2}}.
\end{equation}
Thus, $ u_n \sp\kk (x',\cdot)' $ vanishes in a finite number of points,
for every $ x'\in\R\sp{\N - 1} $.
Because $ u_n \sp\kk $ and $ v_n \sp\kk $ have compact support,
there exists a real sequence $ (t_n) $ such that the two functions
\[
u_n \sp\kk,v_n \sp\kk (\cdot + t_n e_\N)
\]
have disjoint support, where $ e_\N = (0,\dots,0,1) $. Then, we
can apply Lemma~\ref{lem:steiner} to 
\begin{gather}
w_n := u_n + v_n (\cdot + t_n e_\N)\in N_\rho\\
\label{eq:split}
J(w_n) = J(u_n) + J(v_n).
\end{gather}
We denote with $ w_n \sp{*e_\N} $ the Steiner symmetrization of $ w_n $ 
with respect to $ e_\N $.  The Steiner symmetrization has the
same properties of the decreasing rearrangement 
we used in Section~\ref{sect:three}. That is,
\[
\|w_n \sp{\kk\,*e_\N}\| = \no{w_n\sp \kk},\quad 
J(w_n \sp{*e_\N})\leq J(w_n).
\]
Given $ x'\in\R\sp{\N - 1} $, the relation between the Steiner and the
decreasing rearrangement gives
\[
\D_\N w_n\sp{\kk\,*e_\N} (x',t) = {w_n \sp{\kk\,*} (x',\cdot)}' (t)
\]
Then, we can write
\begin{equation}
\label{eq:lem:subadd-1}
\begin{split}
&\int_{\R\sp\N} |\D_\N w_n \sp{\kk\,*e_\N}|\sp 2 =
\int_{\R\sp{\N - 1}} \int_\R |{{w_n \sp{\kk\,*} (x',\cdot)}' (t)}|\sp 2 
dt\,dx' \\
=&
\int_{U_n \sp\kk} \int_\R |{{w_n \sp{\kk\,*} (x',\cdot)}' (t)}|\sp 2 
dt\,dx'
\int_{V_n \sp\kk} \int_\R |{{w_n \sp{\kk\,*} (x',\cdot)}' (t)}|\sp 2 
dt\,dx'
=: A_1 \sp\kk + A_2 \sp\kk
\end{split}
\end{equation}
where
\begin{align*}
U_n \sp\kk &= \{x'\in\R\sp{\N - 1}\,|\,\sup_{\R} u_n \sp\kk (x',\cdot) \leq
\sup_{\R} v_n \sp\kk (x',\cdot)\}\\
V_n \sp\kk &= \{x'\in\R\sp{\N - 1}\,|\,\sup_{\R} v_n \sp\kk (x',\cdot) <
\sup_{\R} u_n \sp\kk (x',\cdot)\}.
\end{align*}
For every $ x'\in\R\sp{\N - 1} $, $ u_n \sp\kk (x',\cdot) $ and 
$ v_n \sp\kk (x',\cdot + te_\N) $
satisfy the hypothesis of Lemma~\ref{lem:steiner}, by 
Proposition~\ref{prop:bounded} and \eqref{eq:nice}. Thus,
\begin{align*}
A_1 \sp \kk &\leq\int_{U_n \sp \kk } 
\Big(\no{{w_n (x',\cdot)}'}_{L\sp 2 (\R)} \sp 2 -
\frac{3}{4}\no{{u_n \sp \kk (x',\cdot)}'}_{L\sp 2 (\R)}\sp 2\Big)
dx'\\
A_2 \sp \kk &\leq\int_{V_n \sp \kk} 
\Big(\no{{w_n \sp \kk (x',\cdot)}'}_{L\sp 2 (\R)} \sp 2 -
\frac{3}{4}\no{{v_n \sp \kk (x',\cdot)}'}_{L\sp 2 (\R)}\sp 2\Big) dx'.
\end{align*}
In the last equalities we used
\[
\no{v_n \sp\kk (x',\cdot)}_{L\sp 2 (\R)} = 
\no{v_n \sp\kk (x',\cdot + t_n e_\N)}_{L\sp 2 (\R)}
\]
and the fact that the definition of $ U_n \sp\kk $ and $ V_n \sp\kk $
do not change if we shift the last coordinate.
Thus,
\begin{equation}
\label{eq:lem:subadd-3}
\begin{split}
A_1 \sp \kk + A_2\sp\kk &\leq\no{\D_\N w_n \sp\kk}\sp 2 \\
&-
\frac{3}{4}\left(
\no{\D_\N u_n \sp\kk}_{L\sp 2 (U_n \sp\kk\times\R)}\sp 2
+ \no{\D_\N v_n\sp\kk}_{L\sp 2 (V_n \sp\kk\times\R)}\sp 2\right).
\end{split}
\end{equation}
Because $ u_n $ is radially symmetric, then
\begin{gather*}
\D_\ii u_n \sp\kk (x_1,\dots,x_\ii,\dots,x_\N) = 
\D_\N u_n \sp\kk (x_1,\dots,x_\N,\dots,x_\ii)
\end{gather*}
for every $ 1\leq\ii\leq\N $. Then, for every measurable subset 
$ A\subset\RN $, by the equality above
\[
\no{\D_\N u_n\sp\kk}_{L\sp 2 (A)} = \no{\D_\ii u_n\sp\kk}_{L\sp 2 (A_i)}
\]
where $ A_i $ is the set obtained by $ A $ throught the permutation
of the coordinates $ \ii $ and $ \N $. Moreover,
\[
\no{\D_\ii u_n\sp\kk}_{L\sp 2 (A_i)} \sp 2 = 
\no{\D_\ii u_n\sp\kk}_{L\sp 2 (A)} \sp 2.
\]
Because $ \D_\ii u_n\sp\kk $ is radially symmetric.
In conclusion 
\eqref{eq:lem:subadd-3} can be written as
\begin{equation}
\label{eq:lem:subadd-20}
\begin{split}
\N(A_1 \sp \kk + A_2\sp\kk) &\leq\no{\D w_n \sp\kk}\sp 2 \\ 
&- 
\frac{3}{4}\left(%
\no{\D u_n \sp\kk}_{L\sp 2 (U_n \sp\kk\times\R)}\sp 2 %
+ \no{\D v_n\sp\kk}_{L\sp 2 (V_n \sp\kk\times\R)}\sp 2\right)
\end{split}
\end{equation}
We define
\begin{equation}
\label{eq:lem:subadd-9}
d_n \sp \kk = \no{\D u_n \sp\kk}_{L\sp 2 (U_n \sp \kk\times\R)}\sp 2
+ \no{\D v_n\sp\kk}_{L\sp 2 (V_n \sp \kk\times\R)}\sp 2.
\end{equation}
We will prove that a subsequence of $ (d_n \sp\kk) $ is bounded
from below. Because $ u_n $ and $ v_n $ are minimizing sequences,
from \eqref{eq:young} and (i) of Lemma~\ref{lem:general} 
\begin{equation}
\label{eq:lem:subadd-4}
\no{u_n \sp \kk}_{2\gamma},\no{v_n \sp \kk}_{2\gamma}\geq c = 
c(\rho,\tau) > 0.
\end{equation}
Because they are radially symmetric, by \cite[Theorem~A.I']{BL83},
up to extract a subsequence we can suppose that
\[
u_n \sp\kk\ra u_\kk,v_n \sp\kk\ra v_\kk\text{ in } L\sp{2\gamma} (\RN)
\]
and $ u_\kk,v_\kk\not\equiv 0 $ by \eqref{eq:lem:subadd-4}. 
Up to extract a subsequence, we can suppose that the convergence is
pointwise a.e. Thus, there are points $ x_\kk,y_\kk $ other than $ 0 $
such that
\begin{equation}
\label{eq:lem:subadd-8}
u_\kk (x_\kk),v_\kk (y_\kk)\geq h > 0.
\end{equation}
Then for $ n\geq n_1 $, 
\begin{equation}
\label{eq:lem:subadd-7}
u_n \sp\kk (x_\kk),v_n \sp\kk (y_\kk)\geq h/2.
\end{equation}
We define $ R = \min\left\{|x_\kk|,|y_\kk|\,|\,\kk=1,2\right\} $. Because
$ u_n \sp\kk $ and $ v_n \sp\kk $ are radially decreasing, for
every $ n\geq n_1 $ and $ x\in B(0,r) $, we have
\begin{equation}
\label{eq:lem:subadd-5}
u_n \sp\kk (x)\geq h/2,\quad
v_n \sp\kk (x)\geq h/2.
\end{equation}
By applying the second inequality of \eqref{eq:sob-3} to the domains
\[
(U_n \sp i \times\R)\cap B_r,\quad 
(V_n \sp i \times\R)\cap B_r,
\]
because $ u_n\in N_\tau $ and $ v_n\in N_\sigma $, and 
$ \rho_\kk\geq\sigma_\kk,\tau_\kk $, from \eqref{eq:lem:subadd-5}, 
there exists $ c_\kk = c_\kk (\rho_\kk,S) > 0 $ such that
\[
\begin{split}
\no{\D u_n \sp\kk}_{L\sp 2 (U_n \sp\kk \times\R)}
&\geq\no{\D u_n \sp\kk}_{L\sp 2 ((U_n \sp\kk \times\R)\cap B_r)}\\
&\geq c\left(%
\frac{h|(U_n \sp\kk \times\R)\cap B_R|\sp{\frac{1}{2\gamma}}}%
{2}\right)\sp{\frac{2\gamma}{n(\gamma - 1)}}
\end{split}
\]
and
\[
\begin{split}
\no{\D v_n \sp\kk}_{L\sp 2 (V_n \sp\kk \times\R)}\geq&
\no{\D v_n \sp\kk}_{L\sp 2 ((V_n \sp\kk \times\R)\cap B_R)}\\
\geq& c \left(%
\frac{h|(V_n \sp\kk \times\R)\cap B_R|\sp{\frac{1}{2\gamma}}}%
{2}\right)\sp{\frac{2\gamma}{n(\gamma - 1)}}
\end{split}
\]
Because $ U_n \sp\kk\cap B_R $ is the complementary of 
$ V_n \sp\kk\cap B_R $ in $ B_R $, for every $ \kk=1,2 $, at least one of 
the two quantities
\[
\delta_\kk := 
\liminf_{n\ra\infty} \frac{|(U_n \sp\kk\times\R)\cap B_R|}{|B_R|},\
1 -\delta_\kk
\]
is not smaller that $ 1/2 $. Then,
\[
d_n \sp\kk\geq
2\sp{-\frac{2(2\gamma + 1)}{n(\gamma - 1)}}
h\sp{\frac{4\gamma}{n(\gamma - 1)}}
|B_R|\sp{\frac{1}{n(\gamma - 1)}} c_\kk
=: d_\kk
\]
for $ \kk=1,2 $. Then, from (\ref{eq:lem:subadd-1},\ref{eq:lem:subadd-20})
and the inequality above we obtain
\begin{equation}
\label{eq:lem:subadd-10}
\N \int_{\R\sp\N} |\D_\N w_n \sp{\kk\,*e_\N}|\sp 2\leq
\no{\D w_n \sp\kk}\sp 2 - \frac{3d_n \sp\kk}{4}\leq 
\no{\D w_n \sp\kk}\sp 2 - \frac{3d_\kk}{4}.
\end{equation}
Finally, we consider the decreasing rearrangement of $ w_n \sp{*e_\N} $ 
(which may not be radially symmetric). By \cite[Lemma~11]{Kaw85}, when 
$ \N = 1 $, we have
\[
\begin{split}
\no{\D_n w_n \sp{\kk\,*e_\N*}}\sp 2 &= \int_{\R\sp{\N - 1}}
\no{w_n \sp{\kk\,*e_\N*} (x',\cdot)'}_{L\sp 2 (\R)}\sp 2 dx'\\
&\leq
\int_{\R\sp{\N - 1}}
\no{w_n \sp{\kk\,*e_\N} (x',\cdot)'}_{L\sp 2 (\R)}\sp 2 dx' =
\no{\D_n w_n \sp{\kk\,*e_\N}}\sp 2.
\end{split}
\]
From \eqref{eq:lem:subadd-10}, we can write
\[
\N\int_{\R\sp\N} |\D_\N w_n \sp{\kk\,*e_\N*}|\sp 2\leq 
\no{\D u_n\sp\kk}\sp 2 + \no{\D v_n\sp\kk}\sp 2 - 
\frac{3d_\kk}{4}.
\]
Because $ w_n \sp{\kk\,*e_\N*} $ is radially symmetric
\[
\int_{\R\sp\N} |\D w_n \sp{\kk\,*e_\N*}|\sp 2\leq 
\no{\D u_n\sp\kk}\sp 2 + \no{\D v_n\sp\kk}\sp 2 - \frac{3d_\kk}{4}.
\]
Hence,
\[
\f_\rho\leq J(w_n \sp{*e_\N*})\leq J(u_n) + J(v_n) - \frac{3d}{8}
\]
where $ d := d_1 + d_2 $. Taking the limit as $ n\ra\infty $, we obtain
\[
\f_\rho\leq \f_\tau + \f_\sigma - D
\]
where $ D := 3d/8 > 0 $.
\end{proof}
\section{Concentration of minimizing sequences in $ \R\sp\N $}
In this section we establish the existence of a minimizer for $ J $
on $ \R\sp\N $ and a concentration property of minimizing sequences
on the constraint $ N_\rho $. 
We start with the following
\begin{lemma}
\label{lem:tied-together}
Let $ (u_n)_{n\geq 1} $ be a bounded sequence in $ \HH $ such that
\[
\liminf_{n\ra\infty}\nint |u_n \sp 1 u_n \sp 2|\sp\gamma > 0
\]
where $ 1 <  \gamma < 2\sp */2 $. Then, there exists a sequence 
$ (y_n)\subset\RN $ such that
\[
u_n \sp\kk (\cdot - y_n)\wk u_\kk,\quad u_1 u_2\not\equiv 0.
\]
\end{lemma}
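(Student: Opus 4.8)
The plan is to rule out the \emph{vanishing} of the product $u_n^1u_n^2$ itself — rather than of the two factors separately — and to read off from this a single common translation sequence. First I would record the standing bounds: since $1<\gamma<2^*/2$ we have $2<2\gamma<2^*$, so by \eqref{eq:sob-2} the sequences $u_n^\kk$ are bounded in $L^{2\gamma}(\RN)$; let $M$ bound $\no{u_n^1}_{H^1}$ and $\no{u_n^2}_{H^1}$, and set $\delta:=\liminf_n\nint|u_n^1u_n^2|^\gamma>0$, so $\nint|u_n^1u_n^2|^\gamma\ge\delta/2$ for large $n$. The key step is an elementary ``no-spreading'' inequality. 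Tiling $\RN$ by unit cubes $(Q_i)_i$, H\"older on each cube gives $\no{u_n^1u_n^2}_{L^\gamma(Q_i)}\le\no{u_n^1}_{L^{2\gamma}(Q_i)}\no{u_n^2}_{L^{2\gamma}(Q_i)}$; then the Sobolev embedding $H^1(Q_i)\hookrightarrow L^{2\gamma}(Q_i)$ (with a constant independent of $i$) together with Cauchy--Schwarz in the sum over $i$ gives
\[ \sum_i\no{u_n^1u_n^2}_{L^\gamma(Q_i)}\le\Big(\sum_i\no{u_n^1}_{L^{2\gamma}(Q_i)}^2\Big)^{1/2}\Big(\sum_i\no{u_n^2}_{L^{2\gamma}(Q_i)}^2\Big)^{1/2}\le C\,\no{u_n^1}_{H^1}\no{u_n^2}_{H^1}\le CM^2. \]
Since $\gamma\ge1$ and the cubes tile $\RN$, writing $\no{\cdot}_{L^\gamma(Q_i)}^\gamma=\no{\cdot}_{L^\gamma(Q_i)}^{\gamma-1}\no{\cdot}_{L^\gamma(Q_i)}$ yields
\[ \nint|u_n^1u_n^2|^\gamma=\sum_i\no{u_n^1u_n^2}_{L^\gamma(Q_i)}^\gamma\le\Big(\sup_i\no{u_n^1u_n^2}_{L^\gamma(Q_i)}\Big)^{\gamma-1}\sum_i\no{u_n^1u_n^2}_{L^\gamma(Q_i)}\le CM^2\Big(\sup_i\no{u_n^1u_n^2}_{L^\gamma(Q_i)}\Big)^{\gamma-1}. \]
Comparing with $\nint|u_n^1u_n^2|^\gamma\ge\delta/2$ forces the existence of $\eta>0$ and, for each large $n$, of a cube $Q_{i_n}$ with $\int_{Q_{i_n}}|u_n^1u_n^2|^\gamma\ge\eta$.

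Next I would pass to the limit. Let $y_n\in\RN$ be the shift carrying $Q_{i_n}$ onto a fixed unit cube $Q$, and set $\hat u_n^\kk:=u_n^\kk(\cdot-y_n)$; these are bounded in $H^1(\RN)$, so along a subsequence $\hat u_n^\kk\wk u_\kk$ in $H^1$ for $\kk=1,2$. By the Rellich--Kondrakov theorem (again using $2\gamma<2^*$), $\hat u_n^\kk\to u_\kk$ strongly in $L^{2\gamma}(Q)$, hence $\hat u_n^1\hat u_n^2\to u_1u_2$ in $L^\gamma(Q)$ by H\"older, and therefore
\[ \int_Q|u_1u_2|^\gamma=\lim_n\int_Q|\hat u_n^1\hat u_n^2|^\gamma=\lim_n\int_{Q_{i_n}}|u_n^1u_n^2|^\gamma\ge\eta>0. \]
Thus $u_1u_2\not\equiv0$ (in particular $u_1,u_2\not\equiv0$), and $(y_n)$ is the sequence required.

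The step I expect to be the genuine obstacle is exactly the one the ``no-spreading'' inequality is meant to overcome. The naive route — applying the classical vanishing dichotomy to $u_n^1$ and to $u_n^2$ one at a time — produces two concentration sequences with no reason to remain at bounded distance, and translating along one of them may push the mass of the other off to infinity, so that the product dies in the limit. Working instead with the $L^\gamma$-mass of $u_n^1u_n^2$, the inequality above — whose only non-trivial ingredient is $\gamma>1$, used to split the power — shows this mass cannot be spread thinly over infinitely many unit cubes; it concentrates on a single one, which pins down \emph{both} factors simultaneously. Everything else is routine, and the assumption $1<\gamma<2^*/2$ enters exactly twice: the lower bound to run the splitting trick, and the upper bound to keep $2\gamma$ subcritical, so that the local Sobolev embedding and the Rellich--Kondrakov compactness are available.
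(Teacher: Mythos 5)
Your proof is correct. Both you and the paper recognise that the whole point is to work with the \emph{product} $w_n=u_n^1u_n^2$ as the object to which a vanishing argument is applied, rather than applying concentration-compactness to each factor separately; this is the key idea, and you identified it and explained clearly why the naive route fails. The difference is in how the vanishing alternative is excluded. The paper observes that $w_n$ is bounded in $L^1$ with $\D w_n$ bounded in $L^{N/(N-1)}$ (by H\"older and Sobolev on the factors), and then directly invokes Lions' concentration-compactness lemma \cite[Lemma~I.1]{Lio84-II} with $q=1$, $p=N/(N-1)$: either $w_n$ has a non-vanishing ball of $L^1$-mass, or $w_n\to 0$ in every $L^\alpha$, $1<\alpha<N/(N-2)$, and the second alternative is excluded precisely because $\gamma<2^*/2=N/(N-2)$. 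You instead re-derive the relevant half of that dichotomy from scratch: the tiling of $\RN$ by unit cubes, the local Sobolev embedding with a uniform constant, Cauchy--Schwarz over the tiles, and the splitting $a_i^\gamma\le(\sup_j a_j)^{\gamma-1}a_i$ (this last step is where $\gamma>1$ enters in your version, whereas in the paper it is used only to place $\gamma$ inside the admissible exponent range of Lions' lemma). Both routes then finish identically with Rellich--Kondrakov on a fixed bounded set. Your version is more elementary and self-contained — it avoids citing Lions and makes visible exactly where the two bounds on $\gamma$ are used — while the paper's is shorter by black-boxing the vanishing lemma. One small cosmetic remark: as in the paper, the weak convergence $u_n^\kk(\cdot-y_n)\wk u_\kk$ holds only after passing to a subsequence, which should be said explicitly when you extract $(y_n)$.
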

\begin{proof}
Let $ w_n = u_n \sp 1 u_n \sp 2 $. From the H\"older inequality and
\eqref{eq:sob-2}, it follows that
\[
Dw_n\in L\sp{\N/\N - 1}.
\]
We apply \cite[Lemma~I.1,p.\,231]{Lio84-II} with $ q = 1 $ and 
$ p = \N/\N - 1 $. Hence, either there exists $ R > 0 $ and a sequence
$ (y_n) $ such that
\begin{equation}
\label{eq:not-vanish}
\liminf_{n\ra\infty}\int_{B(-y_n,R)} |w_n| > 0
\end{equation} 
or 
\begin{equation}
\label{eq:contr}
w_n\ra 0\text{ in } L\sp\alpha,\text{ for every }
\alpha\in (1,\N/\N - 2).
\end{equation}
The latter cannot happen because, by \eqref{eq:A1}
\[
\gamma < \frac{2\sp *}{2} = \frac{\N}{\N - 2}.
\]
So, \eqref{eq:contr} would contradict the hypothesis of the Lemma. Hence
\eqref{eq:not-vanish} holds.
By changing the variable of integration in \eqref{eq:not-vanish}
and letting 
\[
v_n \sp \kk = u_n \sp \kk (\cdot - y_n)
\]
we obtain 
\begin{equation}
\label{eq:bdd}
\liminf_{n\ra\infty}\int_{B_R} v_n \sp 1 v_n \sp 2 > 0.
\end{equation}
Since $ v_n \sp\kk $ are bounded in $ H\sp 1 $, we can suppose that they
converge weakly to some limits $ u_1 $ and $ u_2 $, respectively. By the
Rellich-Kondrakhov theorem, we can suppose that such convergence is
strong in $ L\sp 2 (B_R) $. Thus, in \eqref{eq:bdd} we can take the limit
in the integrand and obtain
\[
\int_{B_R} u_1 u_2 > 0
\]
which implies $ u_1 u_2\not\equiv 0 $. And
$ u_n \sp\kk (\cdot - y_n)\rightharpoonup u_\kk $.
\end{proof}
\begin{theorem}
\label{thm:concentration}
Let $ (u_n)_{n\geq 1} $ be a minimizing sequence
for $ J $ over $ N_\rho $. Then, there
exists $ u\in N_\rho $ and a sequence 
$ (y_n)_{n\geq 1} $ such that
\begin{gather*}
u_n = u(\cdot + y_n) + o(1)\text{ in } \HH\\
J(u) = \inf_{N_\rho} J.
\end{gather*}
\end{theorem}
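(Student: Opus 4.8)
The plan is to follow the concentration-compactness dichotomy for the sequence of $L^2$-densities, exactly as in the scalar case, using the strict sub-additivity established in Proposition~\ref{lem:subadd} to rule out the splitting alternative. First I would observe that by Lemma~\ref{lem:general}(ii) the minimizing sequence $(u_n)$ is bounded in $\HH$, so up to a subsequence we may assume $u_n \wk u$ weakly in $\HH$. The core quantitative input is the strict sub-additivity: for any decomposition $\rho = \tau + \sigma$ with $0 < \tau_i$, $0<\sigma_i$ and $\tau,\sigma\neq\rho$ we have $\f_\rho < \f_\tau + \f_\sigma$ (and, combined with Lemma~\ref{lem:general}(i), $\f_\rho < 0$). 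I would also record the continuity of $\tau\mapsto\f_\tau$, which follows from a routine rescaling argument, so that the sub-additivity extends to limits.

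Next I would run the dichotomy. Apply the concentration-compactness lemma to the nonnegative measures $\mu_n = \sum_{\kk}(|u_n^\kk|^2 + |\D u_n^\kk|^2)\,dx$ (or work componentwise with $|u_n^\kk|^2$); after passing to a subsequence one of three things happens: (a) compactness up to translation, (b) vanishing, or (c) dichotomy. \emph{Vanishing} is excluded because it would force $\no{u_n^\kk}_{2\gamma}\ra 0$ (by the vanishing lemma, e.g.\ \cite[Lemma~I.1]{Lio84-II} as used in Lemma~\ref{lem:tied-together}), hence $\nint|u_n^1 u_n^2|^\gamma \ra 0$ and $\nint\G(u_n)\ge 0$, so $\liminf J(u_n)\ge 0$, contradicting $\f_\rho<0$. \emph{Dichotomy} is excluded by the strict sub-additivity: the splitting would produce, after translating, two pieces $u_n^{(1)}$, $u_n^{(2)}$ with disjoint supports (using cut-offs, whose error is controlled since $\F$ is a combined power type and $\D u_n^\kk$ is bounded in $L^2$) such that $\no{u_n^{(1),\kk}}^2\to\tau_\kk$, $\no{u_n^{(2),\kk}}^2\to\sigma_\kk$ with $\tau_\kk+\sigma_\kk=\rho_\kk$ and both $\tau,\sigma$ admissible and $\neq\rho$, and $J(u_n)\ge J(u_n^{(1)})+J(u_n^{(2)})+o(1)\ge \f_{\tau^{(n)}}+\f_{\sigma^{(n)}}+o(1)$; passing to the limit and using continuity and strict sub-additivity gives $\f_\rho \ge \f_\tau+\f_\sigma > \f_\rho$, a contradiction. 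Hence compactness holds: there is $(y_n)\subset\RN$ such that $v_n^\kk := u_n^\kk(\cdot+y_n)$ is tight, i.e.\ for every $\var>0$ there is $R$ with $\int_{B_R^c}(|v_n^\kk|^2+|\D v_n^\kk|^2)<\var$ for all $n$.

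Then I would upgrade tightness to strong convergence and identify the limit. From boundedness, $v_n \wk v$ in $\HH$ for a subsequence; tightness together with the local compact embedding $H^1(B_R)\hookrightarrow L^2(B_R)$ (and $\hookrightarrow L^{2\gamma}$, since $2\gamma<2^*$) upgrades this to $v_n^\kk\ra v_\kk$ strongly in $L^2(\RN)$ and in $L^{2\gamma}(\RN)$. Strong $L^2$-convergence gives $\no{v_\kk}^2=\rho_\kk$, so $v\in N_\rho$. By Proposition~\ref{prop:smooth}(b) (applicable since $q<2^*$), $J(v_n-v)=J(v_n)-J(v)+o(1)$; since $\no{\D(v_n-v)}^2\ge 0$ and $\nint\F(v_n-v)\to 0$ (as $v_n-v\to 0$ in both $L^p$ and $L^q$ by interpolation between the $L^2$ and $L^{2^*}$ bounds—wait, for this I use that $v_n-v\to0$ in $L^{2\gamma}$ and boundedness in $H^1$ to get $\nint\G(v_n-v)\to\nint\G(0)=0$ via the combined-power bound, and $\nint|v_n^1-v^1|^\gamma|v_n^2-v^2|^\gamma\to0$), we get $\liminf J(v_n)\ge J(v)\ge \f_\rho$. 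Since $J(v_n)=J(u_n)\to\f_\rho$, this forces $J(v)=\f_\rho$, so $v$ is a minimizer; moreover $\no{\D v_n}\to\no{\D v}$, which with weak convergence yields $v_n\ra v$ strongly in $\HH$. Setting $u:=v$ gives $u_n = u(\cdot+y_n)+o(1)$ in $\HH$ and $J(u)=\inf_{N_\rho}J$.

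The main obstacle is the exclusion of dichotomy: one must carefully localize the minimizing sequence with smooth cut-offs $\chi_R$ and $1-\chi_{2R}$, control the cross terms and the nonlinear term $\nint\F$ on the annulus (where the truncated functions interact), and verify that the resulting mass splitting $\tau^{(n)},\sigma^{(n)}$ stays in the admissible region $\{0<\tau_i<\rho_i\}$ so that strict sub-additivity applies — in particular ruling out the degenerate case where the split is $(\rho,0)$ or $(0,\rho)$ in one of the two components, which requires knowing that the ``mass'' parameter $\alpha$ from the dichotomy is coordinatewise strictly between $0$ and $\rho_\kk$, or else treating the mixed cases by a separate argument that still contradicts $\f_\rho<0$ or the sub-additivity inequality.
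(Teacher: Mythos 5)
Your route is genuinely different from the paper's. You run the full three-alternative concentration-compactness dichotomy on the $L^2$ (or $H^1$) densities and then exclude vanishing and dichotomy, whereas the paper short-circuits all of this by applying Lions' $L^1$-vanishing lemma to the \emph{product} $u_n^1 u_n^2$ (Lemma~\ref{lem:tied-together}). That lemma delivers, in one stroke, a translation sequence $(y_n)$ and a weak limit $u$ with $u_1 u_2\not\equiv 0$; the paper then applies the Brezis--Lieb splitting of Proposition~\ref{prop:smooth}(b) directly to $u_n(\cdot+y_n)-u$ and invokes strict sub-additivity with $\tau_\kk=\no{u_\kk}^2$. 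The crucial point is that $u_1 u_2\not\equiv 0$ automatically gives $\tau_\kk>0$ for \emph{both} $\kk$, which is exactly the hypothesis of Proposition~\ref{lem:subadd}, and the splitting parameter $\tau$ is fixed (the mass of the weak limit), not an $n$-dependent quantity that must be passed to the limit.

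This is where your proposal has a real gap, and you flag it yourself: in the dichotomy branch of the classical argument, the two-component mass vector may split degenerately, with $\tau_\kk=0$ or $\tau_\kk=\rho_\kk$ in one coordinate, and Proposition~\ref{lem:subadd} (which requires $\rho_i\geq\tau_i>0$ and $\tau\neq\rho$) does not cover these cases. You say this should be handled ``by a separate argument that still contradicts $\f_\rho<0$ or the sub-additivity inequality,'' but do not supply it, and it is not immediate: for instance when $\tau=(\rho_1,0)$ the piece carrying $\tau$ lives on a one-component constraint where the coupling term $-\cg|u_1u_2|^\gamma$ vanishes and $\J\geq 0$, so one must argue that all of the negative part of $\f_\rho$ is carried by the other piece, and then still reach a contradiction; this is a genuine extra step, not a routine remark. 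You also invoke continuity of $\tau\mapsto\f_\tau$ ``by a routine rescaling argument'' but do not prove it; the paper needs no such statement, since the Brezis--Lieb decomposition can be corrected by an $o(1)$ rescaling into $N_{\rho-\tau}$ using only the continuity of $\J$ on $\HH$ from Proposition~\ref{prop:smooth}(a). On the other hand, your final step — deducing $\no{\D(v_n-v)}\to 0$ from $J(v_n-v)\to 0$ once $\nint\F(v_n-v)\to 0$ — is cleaner and more direct than the paper's argument, which passes through Ekeland's variational principle and a Palais--Smale/Cauchy estimate to upgrade $L^2$ to $\HH$ convergence.
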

\begin{proof}
By (i) and (ii) of Lemma~\ref{lem:general} $ \f_\rho < 0 $ and the
sequence $ (u_n)_{n\geq 1} $ is bounded. 
Because $ G\geq 0 $ the sequence $ (u_n) $ fulfils the hypothesis of 
Lemma~\ref{lem:tied-together}, once $ \gamma < \N/(\N - 2) $ holds. 
This, in turn, follows from \eqref{eq:A1} and 
\[
1 + \frac{2}{\N} < \frac{\N}{\N - 2}.
\]
Then, we consider the sequence $ (y_n)_{n\geq 1} $ and $ u\in\HH $ 
given by the Lemma~\ref{lem:tied-together}. We define
\begin{gather*}
v_n := u_n (\cdot - y_n) - u,\ \ 
\tau := \big(\no{u_1}_{L\sp 2} \sp 2,\no{u_2}_{L\sp 2} \sp 2\big).
\end{gather*}
We have $ \tau_i\leq\rho_i $, by the weak lower semi-continuity property of
the $ L\sp 2 $-norm. Suppose that $ \tau\neq\rho $. By (b) of 
Proposition~\ref{prop:smooth}, up to extract a subsequence, 
we have
\begin{equation}
\label{eq:cont}
J(v_n) = J(u_n(\cdot +y_n)) - J(u) + o(1).
\end{equation}
By a change of variable, the first term of the right member equals
$ J(u_n) $ which converges to $ \f_\rho $. Hence, by Lemma~\ref{lem:subadd}
\[
\f_{\rho - \tau}\leq \f_\rho - \f_\tau < \f_{\rho - \tau} - D
\]
for some constant $ D > 0 $. Thus $ \tau = \rho $ and $ u\in N_\rho $.
Moreover,
\[
u_n (\cdot - y_n) - u \ra 0\text{ in } L\sp 2 (\RN,\R\sp 2).
\]
By the weak lower semi-continuity property of $ J $,
$ J(u_n)\ra J(u) $. This completes the first and the third statement 
of the Theorem. In order to complete the proof, we only need to show that
the convergence above holds in $ \HH $ as well. Because
\begin{equation}
\label{eq:thm:concentration+1}
(u_n (\cdot + y_n))_{n\geq 1}
\end{equation}
is a minimizing sequence, by Ekeland's Theorem 
\cite[Theorem~5.1,p.\,51]{Str08} there exists a sequence $ w_n $ 
such that 
\[
\no{w_n - u_n (\cdot + y_n)}_{H\sp 1}\ra 0.
\]
and $ (w_n)_{n\geq 1} $ is Palais-Smale. That is, there are 
$ \lambda_1,\lambda_2\in\R $ such that, 
\[
X_n := \D\J(w_n) - (\lambda_1 w_n \sp 1,\lambda_2 w_n \sp 2)\ra 0.
\]
Then
\begin{equation}
\label{eq:thm:concentration}
\begin{split}
\sum_{\kk = 1} \sp 2 (X_n \sp\kk,w_n \sp\kk - w_m \sp\kk) &= 
\sum_{\kk = 1} \sp 2 \no{\D w_n \sp\kk - \D w_m \sp\kk} \sp 2 - 
\lambda_\kk \no{w_n \sp\kk - w_m \sp\kk}\sp 2 \\
&+\nint (\D_\kk \F (w_n) - \D_\kk \F (w_m))(w_n \sp\kk - w_m \sp\kk).
\end{split}
\end{equation}
We look at the third summand of the right-end of the equality above.
We have
\begin{align*}
\D_1 \F (w_n) &= -\gamma\cg w_n \sp{1,\gamma - 1} w_n \sp{2,\gamma} +
\D_1 \G (w_n)\\
\D_2 \F (w_n) &= -\gamma\cg w_n \sp{2,\gamma - 1} w_n \sp{1,\gamma} +
\D_2 \G (w_n).
\end{align*}
By \eqref{eq:A2}, we have
\begin{equation}
\label{eq:thm:concentration-2}
\begin{split}
\left|\nint D_\kk \G (w_n)(w_n \sp\kk - w_m \sp\kk)\right|&\leq
\co\nint |w_n| \sp{p - 1} |w_n \sp\kk - w_m \sp\kk| \\
&+ \co\nint |w_n| \sp{q - 1} |w_n \sp\kk - w_m \sp\kk|.
\end{split}
\end{equation}
We apply the H\"older inequality to each of the integrands with pairs 
$ (p',p) $ and $ (q',q) $ respectively. From inequality \eqref{eq:sob-3}
where $ 2\gamma $ is replaced by $ p $ and $ q $, it follows
\[
\begin{split}
\nint |w_n| \sp{p - 1} |w_n \sp\kk - w_m \sp\kk| &\leq
\no{w_n}_p \sp{p - 1} \no{w_n \sp\kk - w_m \sp\kk}_p\\
&\leq c \no{w_n \sp\kk - w_m \sp\kk}\sp\theta
\end{split}
\]
for some $ c > 0 $ and $ \theta\in (0,1) $. 
A similar inequality holds for $ q $. Then the left member in 
\eqref{eq:thm:concentration-2} converges to zero for $ i=1,2 $.
We use the H\"older inequality to estimate the term
\begin{equation}
\label{eq:thm:concentration-3}
\gamma \cg\nint w_n \sp{1,\gamma - 1} w_n \sp{2,\gamma} 
|w_n \sp\kk - w_m \sp\kk|
\end{equation}
as well. In order to do so, we need to find a triple of real numbers
$ (r_1,r_2,r_3) $ such that
\begin{gather*}
r_1 (\gamma - 1),r_2\gamma\in [2,2\sp *],r_3\in [2,2\sp *)\\
\sum_{i = 1} \sp 3 \frac{1}{r_i} = 1,\quad r_i\geq 1.
\end{gather*}
We briefly check that we can achieve such a triple. The first line can
be written as
\begin{gather*}
r_1\sp{-1} \in [(\gamma - 1)/2\sp *,
(\gamma - 1)/2],
\quad
r_2\sp{-1}\in [\gamma/2\sp*,\gamma/2]\\
1 - r_1 \sp{-1} - r_2 \sp{-1}\in (1/2\sp*,1/2].
\end{gather*}
From \eqref{eq:A1}, $ 2/\gamma - 1 > 2/\gamma > 1 $ for $ \N\geq 3 $. Thus,
the requirement $ r_i\geq 1 $ is included in the first of the two lines
above. Then, it is enough to check that
\[
[1 - (2\gamma - 1)/2, 1 - (2\gamma - 1)/2\sp *]\cap (1/2\sp *,1/2]
\neq\emptyset.
\]
In fact, the intersection above is empty if and only if either
\[
1 - \frac{2\gamma - 1}{2\sp *} \leq\frac{1}{2\sp*}\Rightarrow
\gamma\geq\frac{\N}{\N - 2} > 1 + \frac{2}{\N}
\]
or
\[
\frac{1}{2} < 1 - \frac{2\gamma - 1}{2}\Rightarrow \gamma < 1.
\]
Both of them contradict \eqref{eq:A1}. Hence \eqref{eq:thm:concentration-3}
can be estimated from above by
\[
c\no{w_n \sp\kk - w_m \sp\kk}\sp{\theta}
\]
for some $ c > 0 $ and $ \theta\in (0,1) $. 
Thus, from \eqref{eq:thm:concentration}$ 
(\D w_n \sp\kk)_{n\geq 1} $ is a Cauchy sequence and
\[
w_n \sp\kk \ra u_\kk,\ \ \D w_n \sp\kk\ra f_\kk
\]
for some $ f_\kk\in L\sp 2 (\RN) $. Hence $ f_\kk = \D u_\kk $ and
$ w_n \ra u $ in $ \HH $. By \eqref{eq:thm:concentration+1}
\[
u_n(\cdot - y_n) - u\ra 0 \text{ in } \HH
\]
By a change of variable, the second statement of the Theorem also follows.
\end{proof}
Let $ m_1,m_2 $ such that
\[
\label{eq:A5}
\tag{A5}
F(u) + \frac{1}{2}\big(m_1 \sp 2 u_1 \sp 2 + m_2 \sp 2 u_2 \sp 2)\geq 0.
\]
Under this assumption we prove Theorem~\hyperlink{thm:A}{A}.
\def\proofname{Proof of Theorem~\hyperlink{thm:A}{A}}
\begin{proof}
From \cite[Lemma~1]{Gar10} minimizing sequences of $ E $ over $ \M $
are bounded in $ \HH\oplus\R\sp 2 $. Thus, up to extract a subsequence
\[
\no{u_n \sp\kk}\sp 2 \ra\rho_\kk,\quad\om_n\ra\om.
\]
As in \textsl{Step~II} of the proof \cite[Lemma~2.7]{BBBM08} it can be
shown that 
\[
v_n \sp i = \frac{u_n \sp i \sqrt{\rho_i}}{\no{u_n \sp i}}
\]
is a minimizing sequence for $ \J $ over $ N_\rho $. Then, by 
Theorem~\ref{thm:concentration}, there exists a sequence 
$ (y_n)_{n\geq 1}\subset\RN $ such that
\[
v_n (\cdot + y_n)\ra u\text{ in } \HH
\]
for some $ u\in\HH $. Then, $ (u,\om)\in\M $ is a minimum of $ E $ over
$ \M $.
\end{proof}
\def\proofname{Proof}
\section{Lyapunov functions}
In the space $ \Y := H\sp 1 (\RN,\C\sp 2)\oplus L\sp 2 (\RN,\C\sp 2) $
we denote with $ \dist $ the metric induced by the scalar product
\[
\langle \Phi,\Psi\rangle_{\R} 
:= \re\langle\Phi,\Psi\rangle_{\C}
= \re\sum_{\kk = 1} \sp 2 \nint \phi_\kk \overline{\psi}_\kk 
+ \phi_t \sp\kk \overline{\psi}_t \sp\kk.
\]
\begin{lemma}
\label{lem:inequalities}
Let $ \phi\in H\sp 1 (\RN,\R\sp m) $. Then, $ |\phi|\in H\sp 1 (\RN) $ and the 
inequality
\[
\no{\D\phi}\geq\no{\D|\phi|}.
\] 
holds. If the equality holds and $ |\phi| > 0 $ everywhere, then there exists 
$ \lambda\in\R\sp m $ such that $ |\lambda| = 1 $ and 
\[
\phi(x) = \lambda|\phi(x)|.
\]
\end{lemma}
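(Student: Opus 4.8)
The plan is to treat separately the inequality (together with $|\phi|\in H\sp 1$), proved by a regularisation, and the rigidity in the equality case, proved by a pointwise Cauchy--Schwarz analysis followed by a quotient-rule computation.

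\emph{Step 1 (the inequality and $|\phi|\in H\sp 1$).} For $\var>0$ put $f_\var:=\big(\sum_k\phi_k\sp 2+\var\sp 2\big)\sp{1/2}$. Since $z\mapsto(|z|\sp 2+\var\sp 2)\sp{1/2}-\var$ is Lipschitz on $\R\sp m$ and vanishes at $0$, the function $f_\var-\var$ lies in $H\sp 1(\RN)$ with $\partial_i f_\var=\big(\sum_k\phi_k\,\partial_i\phi_k\big)/f_\var$. Applying the Cauchy--Schwarz inequality in $\R\sp m$ to $(\phi_k)_k$ and $(\partial_i\phi_k)_k$ for each coordinate $i$ gives $\big(\sum_k\phi_k\,\partial_i\phi_k\big)\sp 2\leq\big(\sum_k\phi_k\sp 2\big)\big(\sum_k(\partial_i\phi_k)\sp 2\big)$, hence $|\partial_i f_\var|\sp 2\leq\sum_k(\partial_i\phi_k)\sp 2$ and, summing on $i$, $\no{\D f_\var}\leq\no{\D\phi}$ uniformly in $\var$. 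As also $0\leq f_\var-\var\leq|\phi|$, the family $(f_\var-\var)$ is bounded in $H\sp 1$; since $f_\var\to|\phi|$ pointwise, a weakly convergent subsequence identifies $|\phi|\in H\sp 1$, and weak lower semicontinuity of $u\mapsto\no{\D u}\sp 2$ yields $\no{\D|\phi|}\leq\liminf_\var\no{\D f_\var}\leq\no{\D\phi}$.

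\emph{Step 2 (rank-one structure in the equality case).} Assume $\no{\D\phi}=\no{\D|\phi|}$ and $|\phi|>0$. By the chain rule (equivalently, passing to the limit in Step 1), $\D|\phi|=\big(\sum_k\phi_k\D\phi_k\big)/|\phi|$ a.e., so the pointwise estimate of Step 1 reads $|\D|\phi||\sp 2\leq|\D\phi|\sp 2$ a.e. Since the two integrals coincide, $|\D|\phi||\sp 2=|\D\phi|\sp 2$ a.e.; therefore, for a.e.\ $x$ and every $i$, equality holds in the Cauchy--Schwarz inequality between $(\phi_k(x))_k$ and $(\partial_i\phi_k(x))_k$. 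As $(\phi_k(x))_k\neq 0$, this forces $\partial_i\phi_k(x)=a_i(x)\,\phi_k(x)$ for all $i,k$, i.e.\ $\D\phi_k=\phi_k\,a$ a.e.\ with $a:=\D|\phi|/|\phi|$ (so also $\D|\phi|=|\phi|\,a$).

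\emph{Step 3 (rigidity).} Here the hypothesis that $|\phi|$ is positive \emph{everywhere} enters: it gives $|\phi|\sp{-1}\in L\sp\infty_{\loc}$, hence $|\phi|\sp{-1}\in H\sp 1_{\loc}$ with $\D|\phi|\sp{-1}=-a/|\phi|$. Then $u_k:=\phi_k/|\phi|$ lies in $W\sp{1,1}_{\loc}$ and, by the Leibniz rule and Step 2, $\D u_k=\D\phi_k/|\phi|-\phi_k\D|\phi|/|\phi|\sp 2=(\phi_k a)/|\phi|-(\phi_k|\phi|a)/|\phi|\sp 2=0$ a.e. Since $\RN$ is connected, $u_k$ equals a constant $\lambda_k$ a.e.; with $\lambda:=(\lambda_k)_k$ we obtain $\phi=\lambda|\phi|$, and $|\lambda|=1$ because $|\phi|=|\lambda|\,|\phi|$ with $|\phi|>0$.

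\emph{Expected main obstacle.} The delicate step is Step 3. Mere a.e.\ positivity of $|\phi|$ does \emph{not} suffice: the quotient $\phi_k/|\phi|$ need not even be locally integrable, and $\phi/|\phi|$ can genuinely be non-constant; the precise role of the stronger hypothesis is exactly to furnish the local lower bound on $|\phi|$ that makes the quotient-rule computation legitimate. This is also where the argument improves on \cite[Theorem~7.8]{LL01}: there one divides by a component $\phi_\ell$ assumed everywhere non-zero, whereas normalising by $|\phi|$ only requires $|\phi|$ itself — not any single component — to be bounded away from $0$.
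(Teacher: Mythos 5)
Your proof follows the same route as the paper's: the pointwise Schwarz estimate gives the inequality and $|\phi|\in H^1$, equality forces the rank-one structure $\partial_\ii\phi = a_\ii\phi$, and a quotient-rule argument then shows $\phi_\kk/|\phi|$ is a.e.\ constant. The only material difference in Step~1 is that you derive the chain rule for $|\phi|$ by regularising with $\sqrt{|\phi|^2 + \var^2}$, whereas the paper invokes \cite[Theorems~7.4 and~7.8]{GT98}; both are valid and produce the same formula and bound.

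Two corrections to Step~3 and your closing paragraph. First, the quotient $\phi_\kk/|\phi|$ is globally bounded by $1$, so local integrability is never in question; the delicate point is whether it lies in $W^{1,1}_{loc}$ with the quotient-rule derivative. Second, the claim that everywhere positivity of $|\phi|$ yields $|\phi|^{-1}\in L^\infty_{loc}$ is false for a general $H^1(\RN)$ function: in dimension $\geq 2$ such a function need not be continuous, so pointwise positivity does not give a local lower bound away from zero. What makes your Step~3 work is a \emph{continuous} representative of $|\phi|$, which the Lemma does not assume but which does hold in its application inside the proof of Theorem~B (there $|\phi|$ is a minimiser solving an elliptic system, hence $C^1$). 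For what it is worth, the paper's own treatment of this point carries the same subtlety: it declares $\lambda_\kk := \phi_\kk/|\phi|\in H^1_{loc}$ from $\lambda_\kk\in L^\infty$ plus a bound $|\partial_\ii\lambda_\kk|\leq 2|\D\phi|$ which already presupposes the quotient rule and, as stated, is only correct once one substitutes $\partial_\ii\phi = \mu_\ii\phi$ to see the numerator vanish. So your diagnosis of where the positivity enters is right in spirit, but the $L^\infty_{loc}$ claim needs either a continuity hypothesis or a genuine regularisation argument (for instance working with $\phi_\kk/(|\phi|+\var)$ and showing, using $\partial_\ii\phi=\mu_\ii\phi$, that its distributional derivatives tend to zero).
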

\begin{proof}
Clearly, $ |\phi|\in L\sp 2 (\RN,\R) $. 
By applying \cite[Theorem~7.4,p.\,150]{GT98} and 
\cite[Theorem~7.8,p.\,153]{GT98} with $ f(\vfi) = |\vfi| $, we obtain
\[
\pt_\ii |\phi| = 
\begin{cases}
\displaystyle\frac{\langle\phi,\pt_\ii \phi\rangle}%
{|\phi|} & \text{ if } \phi\neq 0\\
0 & \text{ if } \phi = 0,
\end{cases}
\]
By the Schwarz inequality
\begin{equation}
\label{eq:10}
\begin{split}
|\D |\phi||\sp 2 &= \sum_{\ii = 1} \sp\N |\pt_\ii |\phi||\sp 2 =
\frac{1}{|\phi|\sp 2}
\sum_{\ii = 1} \sp\N |\langle\phi,\pt_\ii \phi\rangle|\sp 2\\
&\leq\sum_{\ii = 1}\sp n |\pt_\ii \phi|\sp 2 = |\D\phi|\sp 2
\end{split}
\end{equation}
if $ \phi\neq 0 $. When $ \phi = 0 $, the same inequality follows easily.
Therefore, $ \D |\phi|\in L\sp 2 (\RN,\R\sp m) $ and, by integration,
the first part of our statement is proved. Now, we suppose that
the
\[
\no{\D |\phi|}\sp 2 = \no{\D\phi}\sp 2.
\]
Because $ \phi\neq 0 $, by \eqref{eq:10} we obtain
\[
|\phi| |\pt_\ii |\phi|| = |\langle\phi,\pt_\ii \phi\rangle|.
\]
Because $ \phi\neq 0 $, there exists $ \mu_i\colon\RN\ra\R $, such that
\[
\pt_\ii\phi = \mu_\ii \phi.
\]
Thus, for every $ 1\leq\kk\leq m $, we have
\[
\pt_\ii \phi_\kk = \mu_\ii \phi_\kk
\]
We claim that each of the functions 
\[
\lambda_\kk\colon\RN\ra\R,\quad
x\mapsto\frac{\phi_\kk (x)}{|\phi(x)|}
\]
are constants. First, we notice that $ \lambda_\kk\in H\sp 1 _{loc} (\RN) $.
In fact $ \lambda_\kk\in L\sp\infty (\RN) $ and
\[
\pt_i \lambda_\kk = \frac{\pt_\ii \phi_\kk |\phi|\sp 2 - 
\phi_\kk \langle \phi,\pt_\ii\phi\rangle}{|\phi|\sp 2}
\]
hence,
$ |\pt_i\lambda_\kk|\leq 2 |\D\phi|\in L\sp 2 (\RN) $.
Moreover, 
\[
\sum_{h = 1}\sp m \pt_\ii\phi_\kk \phi_h \sp 2 - 
\phi_\kk \phi_h \pt_\ii \phi_h 
= \mu_\ii\sum_h \phi_\kk \phi_h \sp 2 - \phi_\kk \phi_h \sp 2 = 0.
\]
Then $ \lambda_\kk $ is constant. Thus,
\[
\phi_\kk = \lambda_\kk |\phi|\text{ on } \RN.
\] 
We conclude the proof by choosing $ \lambda = (\lambda_1,\dots,\lambda_m) $.
\end{proof}
A similar result of the Lemma above is known in \cite[Theorem~7.8]{LL01}
under the more restrictive hypothesis that $ |\phi_k| > 0 $ on $ \RN $
for some $ 1\leq k\leq m $.
\def\proofname{Proof of Theorem~\hyperlink{thm:B}{B}}
\begin{proof}
Given $ \Phi\in\Gamma_{\Ce} $, there are $ \lambda_1,\lambda_2\in\C $ and 
$ y\in\RN $ such that $ |\lambda_\kk| = 1 $ and
\[
\Phi = (\lambda u(\cdot + y),-i\om\lambda u(\cdot + y)).
\]
Then
\begin{gather*}
\EH(\Phi) = E(u,\om) = m_{\Ce},\quad
\CH_\kk (\Phi) = \om_\kk \no{u_\kk}\sp 2 = \Ce_\kk.
\end{gather*}
Because $ \EH $ and $ \CH_\kk $ are continuous, if 
$ \dist(\Phi_n,\Gamma_\Ce)\ra 0 $, then
\begin{equation}
\label{eq:thm:lyapunov-6}
\EH(\Phi_n)\ra m_{\Ce},\quad\CH_\kk (\Phi_n)\ra \Ce_\kk.
\end{equation}
We prove the converse and suppose that \eqref{eq:thm:lyapunov-6} holds.
Because $ m_{\Ce} > 0 $, we can suppose that $ \phi_n \sp\kk\not\equiv 0 $
for every $ n $.
From the Schwarz inequality, we obtain
\begin{equation}
\label{eq:thm:lyapunov-4}
\frac{\CH_\kk (\phi_n,\phi_n \sp t)}{\no{\phi_n \sp \kk}}
\leq\no{\phi_n \sp{t,\kk}}
\end{equation}
By Lemma~\ref{lem:inequalities} and \eqref{eq:thm:lyapunov-4}
\begin{equation}
\label{eq:thm:lyapunov-5}  
\begin{split}
\EH(\phi_n,\phi_n \sp t) &= \frac{1}{2}\nint |\D\phi_n|\sp 2 + 
|\phi_n \sp t|\sp 2 
+ 2\V(\phi_n)\\
&\geq\frac{1}{2} \nint |\D|\phi_n||\sp 2 + 2\V (|\phi_n \sp 1|,|\phi_n \sp 2|)
+ \frac{1}{2}\sum_{i = 1}\sp 2 \frac{\CH_\kk (\phi_n,\phi_n \sp t)\sp 2}%
{\no{\phi_n \sp\kk}\sp 2}.
\end{split}
\end{equation}
We define
\begin{equation}
\label{eq:thm:lyapunov-3}  
\om_n \sp\kk = \frac{\Ce_n \sp\kk}%
{\no{\phi_n \sp\kk} \sp 2},\quad u_n \sp\kk = |\phi_n \sp \kk|.
\end{equation}
because of $ \CH_\kk (\Phi_n)\ra\Ce_\kk $, the sequence $ \om_n \sp\kk $ 
will eventually become positive for $ n $ large enough. Then
\eqref{eq:thm:lyapunov-5} implies
\begin{equation}
\label{eq:thm:lyapunov-8}
\EH(\phi_n,\phi_n \sp t) 
\geq E(u_n,\om_n)\geq m_\Ce.
\end{equation}
Taking the limit as $ n\ra\infty $, the first of \eqref{eq:thm:lyapunov-6}
implies that $ (u_n,\om_n) $ is a minimizing sequence for 
$ E $ over $ \M $. 
By Theorem~\hyperlink{thm:A}{A}, there exists $ (u,\om)\in\M $
and a subsequence $ (y_n)\subset\RN $ such that
\begin{equation}
\label{eq:thm:lyapunov-14}
|\phi_n| = u(\cdot + y_n) + o(1),\quad\om_n = \om + o(1).
\end{equation}
We set 
\[
\psi_n := \phi_n (\cdot - y_n),\quad\psi_n \sp t := \phi_n \sp t (\cdot - y_n).
\] 
By a change of variable, we have
\begin{equation}
\label{eq:thm:lyapunov-9}
\EH(\psi_n,\psi_n \sp t) = \EH(\phi_n,\phi_n \sp t),\quad
\CH_\kk (\psi_n,\psi_n \sp t) = \CH_\kk (\phi_n,\phi_n \sp t).
\end{equation}
Up to extract a subsequence, we can suppose that there exists
$ (\psi,\psi_t)\in\Y $ such that
\begin{equation}
\label{eq:thm:lyapunov-13}
\psi_n\wk\psi\text{ in } H\sp 1 (\RN,\C\sp 2),\quad
\psi_n \sp t\wk \psi_t\text{ in } L\sp 2 (\RN,\C\sp 2).
\end{equation}
By the weak lower semi-continuity of the norm, the strong convergence
of $ |\psi_n| $ and Lemma~\ref{lem:inequalities}, we have
\[
\begin{split}
\EH(\psi_n,\psi_n \sp t)&=\frac{1}{2}\nint |\D\psi_n|\sp 2 + 
|\psi_n \sp t|\sp 2 + 2 \V(\psi_n)\\
&\geq \frac{1}{2}\nint |\D\psi|\sp 2 + 
|\psi_t|\sp 2 + 2 \V(\psi)\\
&\geq \frac{1}{2}\nint |\D|\psi||\sp 2 + 
2 \V(\psi) + 
\frac{1}{2}\sum_{i = 1}\sp 2 \frac{\CH_\kk (\psi,\psi_t)\sp 2}%
{\no{\psi_\kk}\sp 2} \geq m_{\Ce}.
\end{split}
\]
Taking the limit as $ n\ra\infty $, by 
\eqref{eq:thm:lyapunov-9} and the first of \eqref{eq:thm:lyapunov-6}, 
from the first inequality above, we obtain
\begin{equation}
\label{eq:thm:lyapunov-12}
\lim_{n\ra\infty} \no{\psi_n \sp t} = \no{\psi_t},\quad
\lim_{n\ra\infty} \no{\D\psi_n} = \no{\D\psi}.
\end{equation}
From the second inequality we obtain
\begin{equation}
\label{eq:thm:lyapunov-11}
\nint |\D\psi_\kk|\sp 2 = \nint |\D|\psi_\kk||\sp 2,\quad
\frac{\CH_\kk (\psi,\psi_t)}%
{\no{\psi_\kk}} = \no{\psi_t \sp\kk}.
\end{equation}
We show that $ \psi\equiv u $ almost everywhere. In fact, for every
$ R > 0 $, from the first of \eqref{eq:thm:lyapunov-13}, we have
\[
\psi_n \sp\kk\wk \psi_\kk\text{ in } L\sp 2 (B_R,\C).
\]
Because $ \psi_n \sp\kk $ is bounded in $ H\sp 1 (B_R,\C) $,
up to extract a subsequence, we can suppose that the two
limits
\[
\psi_n\sp\kk\ra g_\kk \text{ in } L\sp 2 (B_R,\C),\quad
|\psi_n\sp\kk|\ra |g_\kk| \text{ in } L\sp 2 (B_R).
\]
hold. By the uniqueness of the weak limit $ g_\kk = \psi_\kk $.
From \eqref{eq:thm:lyapunov-14}, 
\[
|g_\kk| = u_\kk.
\]
Then $ u_\kk\equiv |\psi_\kk| $ on $ B_R $. Taking the limit as $ R\ra\infty $,
we obtain
\begin{equation}
\label{eq:thm:lyapunov-15}
u = |\psi|.
\end{equation}
Thus, $ \no{\psi_n \sp\kk}\ra\no{\psi_\kk} $. From the first of 
\eqref{eq:thm:lyapunov-13}, we obtain
\[
\psi_n\ra\psi\text{ in } L\sp 2 (\RN,\C).
\]
Because $ (u,\om) $ is a minimizer of $ E $ over $ \M $, then $ u_\kk > 0 $. 
This can be achieved with the argument used in Propositon~\ref{prop:bounded}.
Then, $ |\psi_\kk| > 0 $. By 
\eqref{eq:thm:lyapunov-11} and Lemma~\ref{lem:inequalities}, there are
$ \lambda_\kk\in\C $ such that $ |\lambda_\kk| = 1 $ and
\[
\psi_\kk = \lambda_\kk |\psi_\kk| = u_\kk.
\]
The second limit in \eqref{eq:thm:lyapunov-12} and the first in 
\eqref{eq:thm:lyapunov-13} give
\[
\D\psi_n \sp\kk\ra \D\psi_\kk.
\]
Therefore
\begin{equation}
\label{eq:thm:lyapunov-16}
\psi_n \sp\kk\ra\lambda_\kk u_\kk\text{ in } H\sp 1 (\RN,\C).
\end{equation}
The second equality in \eqref{eq:thm:lyapunov-11} can written
as follows
\[
\re\nint \overline{-i\psi_\kk} \cdot \psi_t \sp \kk = 
\no{\psi_t \sp\kk} \no{\psi_\kk}.
\]
Thus, we have an equality between the scalar product and the product
of norms. Then, there are $ \tilde{\om}_\kk\in\R $ such that
\begin{equation}
\label{eq:thm:lyapunov-18}
\psi_t \sp \kk = -i\tilde{\om}_\kk \psi_\kk.
\end{equation}
Taking the limit in the first equality of \eqref{eq:thm:lyapunov-3},
we obtain
\[
\om_\kk = \frac{\Ce_\kk}{\no{\psi_\kk}\sp 2}.
\]
From \eqref{eq:thm:lyapunov-18} and \eqref{eq:thm:lyapunov-11}, 
we also obtain $ \tilde{\om}_\kk = \om_\kk $. Hence
\[
\psi_\kk \sp t = -i\om_\kk \psi_\kk.
\]
By the second limit in \eqref{eq:thm:lyapunov-13} and the first
of \eqref{eq:thm:lyapunov-12}
\begin{equation}
\label{eq:thm:lyapunov-17}
\psi_n \sp{\kk,t}\ra\psi_t \sp\kk = -i\om_\kk \lambda_\kk u_\kk
\text{ in } L\sp 2 (\RN,\C).
\end{equation}
Thus, \eqref{eq:thm:lyapunov-16} and \eqref{eq:thm:lyapunov-17} give
\[
d((\psi_n,\psi_n \sp t),\Gamma_\Ce)\ra 0
\]
whence $ d((\phi_n,\phi_n \sp t),\Gamma_\Ce)\ra 0 $.
\end{proof}
\def\proofname{Proof}
The theorem can be restated by saying that 
\begin{gather*}
\VH\colon\Y\ra\R\\
(\phi,\phi_t)\mapsto (\EH(\phi,\phi_t) - m_\Ce)\sp 2 + 
\sum_{\kk = 1} \sp 2 (\CH_\kk (\phi,\phi_t) - \Ce_\kk)\sp 2
\end{gather*}
is a Lyapunov function for $ \Gamma_\Ce $, that is, 
$ \dist(\Phi_n,\Gamma_\Ce)\ra 0 $
if and only $ \VH(\Phi_n)\ra 0 $.
\nocite{Lio84-I,SS85,GSS87,GSS90,Zha03,BF10,BF10-2}
\bibliographystyle{siam}
\def\cprime{$'$} \def\cprime{$'$} \def\cprime{$'$} \def\cprime{$'$}
  \def\cprime{$'$} \def\cprime{$'$} \def\cprime{$'$} \def\cprime{$'$}
  \def\cprime{$'$} \def\polhk#1{\setbox0=\hbox{#1}{\ooalign{\hidewidth
  \lower1.5ex\hbox{`}\hidewidth\crcr\unhbox0}}}

\end{document}